\newtheorem{theorem}{Theorem}[section]
\newtheorem{claim}[theorem]{Claim}
\newtheorem{corollary}[theorem]{Corollary}
\newtheorem{lemma}[theorem]{Lemma}
\newtheorem{proposition}[theorem]{Proposition}
\newtheorem{remark}[theorem]{Remark}
\theoremstyle{definition}
\numberwithin{equation}{section}
\newcommand{\R}{\mathbb R}
\newcommand{\Z}{\mathbb Z}
\newcommand{\N}{\mathbb N}
\newcommand{\X}{\mathcal{X}}
\newcommand{\Y}{\mathcal{Y}}
\newcommand{\z}{\mathcal{Z}}
\newcommand{\e}{\varepsilon}
\def\p{\mathbb P}
\def \E {\mathbb E}
\begin{document}

\title{Variational principles for amenable metric mean dimensions}

\author [Ercai Chen, Dou Dou and Dongmei Zheng]{Ercai Chen, Dou Dou and Dongmei Zheng}

\address{School of Mathematical Sciences and Institute of Mathematics,
Nanjing Normal University, Nanjing 210023, Jiangsu, P.R.China} \email{ecchen@njnu.edu.cn}

\address{Department of Mathematics, Nanjing University,
Nanjing 210093, Jiangsu, P.R. China} \email{doumath@163.com}

\address{School of Physical and Mathematical Sciences, Nanjing Tech University,
Nanjing 211816, Jiangsu, P.R. China} \email{dongmzheng@163.com}

\subjclass[2010]{Primary: 37A15, 37B99, 94A34}
\keywords {amenable group action, mean dimension, mutual information, rate distortion function, variational principle}

\begin{abstract}
In this paper, we prove variational principles between metric mean dimensions and rate distortion functions for countably infinite amenable group actions, which extends recent results by Lindenstrauss and Tsukamoto.
\if
In this paper, we prove two variational principles of metric mean dimensions for countably infinite amenable group actions. The first one is between metric mean dimensions and rate distortion functions, which extends recent results by Lindenstrauss and Tsukamoto from $\Z$-actions. The second one is between metric mean dimensions and measure-theoretic entropies.
\fi
\end{abstract}

\maketitle

\section{Introduction}
Entropy is the most successful invariant in dynamical systems which measures the complexity or uncertainty of the systems. It connects with information theory, dimension theory, fractal geometry and many other aspects in mathematics.

Due to the values of the entropy, dynamical systems can be divided into three classes:
1. systems with zero entropy; 2. systems with finite positive entropy; 3. systems with infinite entropy.
For zero entropy case, in order to give the quantitative measure of randomness or disorder, various of entropy type invariants were introduced:
sequence entropy (Kushnirenko \cite{Ku} and Goodman \cite{Go}), scaled entropy (Vershik \cite{V1,V2,V3}), entropy dimension (Carvalho \cite{Ca}, Ferenczi-Park \cite{FP} and Dou-Huang-Park \cite{DHP1,DHP2}) and so on. The studies on these invariants rely on the detailed analysis to the entropy-related
quantities or functions.
For infinite entropy case, the Gromov-Lindenstrauss-Weiss mean dimension is proved to be a meaningful quantity. The concept of mean dimension was first introduced
by Gromov \cite{Gr} in 1999 and then Lindenstrauss and Weiss \cite{LW} defined a metric version which is called metric mean dimension. These definitions of mean dimension
can be viewed as analogies of the concepts of dimension in dynamical systems. Mean dimension can be applied to solve imbedding problems in dynamical systems
(see for example, \cite{Gu1,Gu2,GT,L,LT}) and also supplies interesting quantities when characterizing large dynamics (\cite{T1,T2,T3}). In fact, from the definition, one may see easily that metric mean dimension is also an entropy-related quantity.

In the study of dynamical system and ergodic theory, people are always interested with the relationships between the topological concepts and the measure-theoretic ones.
For entropy, there exists a variational principle which says that topological entropy is the supreme of measure-theoretic entropy over all invariant Borel probability measures.
A natural question follows is does there exist variational principles for other entropy-related invariants?

For zero entropy case, it is shown that the traditional variational principle does not hold
for both sequence entropy (\cite{Go}) and entropy dimension (\cite{ADP}).

For infinite entropy case, people have been seeking variational principle of mean dimension for almost twenty years since
Gromov-Lindenstrauss-Weiss's mean dimension theory was established. In \cite{KD}, Kawabata and Dembo applied the rate-distortion function in information
 theory to investigate the dimension of fractal sets and established connections between dimensions and rate-distortion functions.
 Motivated by their work, recently Lindemstrauss and Tsukamoto \cite{LT2} proved variational principles for metric mean dimensions.
In the following we give a brief review of their results.

Let $(\X,d,T)$ be a TDS, where $\X$ is a compact metric space with metric $d$ and $T$ a continuous onto map from $\X$ to itself. Denote by
$M(\X,T)$ the collection of $T$-invariant Borel probability measures on $\X$. Let ${\rm \overline{mdim}_M}(\X,d)$ and ${\rm \underline{mdim}_M}(\X,d)$
be the upper and the lower metric mean dimension of TDS $(\X,d,T)$ respectively. Let $R_{\mu}(\cdot)$, $R_{\mu,p}(\cdot)$ and $R_{\mu,\infty}(\cdot)$
be the $L^1$, $L^p$ ($p>1$) and $L^{\infty}$ rate-distortion functions of $(\X,d,T)$ with respect to $\mu\in M(\X,T)$ respectively. For the definitions one may refer to \cite{LT2} and
we will also give the detailed definitions for amenable group actions in section 3.

Recall that the compact metric space $(\X,d)$ is said to have {\it tame growth of covering numbers} if for every $\delta>0$ it holds that
\begin{align}\label{tame-covering}
  \lim_{\e\rightarrow 0}{\e}^{\delta}\log \#(\X,d,\e)=0.
\end{align}

Lindemstrauss and Tsukamoto's variational principles are the following:

\begin{theorem}[$L^1$ and $L^p$ ($p>1$) variational principles, Theorem 16 and Corollary 11 of \cite{LT2}]
Let $(\X,d,T)$ be a TDS and $(\X,d)$ has tame growth of covering numbers, then
  $${\rm \overline{mdim}_M}(\X,d)=\limsup_{\e\to 0}\frac{\sup_{\mu\in M(\X,T)}R_{\mu}(\e)}{|\log \e|}=\limsup_{\e\to 0}\frac{\sup_{\mu\in M(\X,T)}R_{\mu,p}(\e)}{|\log \e|},$$
$${\rm \underline{mdim}_M}(\X,d)=\liminf_{\e\to 0}\frac{\sup_{\mu\in M(\X,T)}R_{\mu}(\e)}{|\log \e|}=\liminf_{\e\to 0}\frac{\sup_{\mu\in M(\X,T)}R_{\mu,p}(\e)}{|\log \e|}.$$
\end{theorem}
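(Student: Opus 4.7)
The plan is to establish, at each scale $\e>0$, matching upper and lower bounds on $\sup_{\mu\in M(\X,T)} R_\mu(\e)$ by quantities whose division by $|\log\e|$ recovers the metric mean dimension. Since the argument produces inequalities that hold for each $\e$ separately, it yields the $\limsup$ and $\liminf$ formulas simultaneously. Because $R_{\mu,1}\le R_{\mu,p}\le R_{\mu,\infty}$, the $L^p$ identity for $p>1$ follows from the $L^1$ identity together with an upper bound via covering numbers that is insensitive to the choice of norm on the distortion.

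For the upper bound direction I would fix $\mu$ and $n$, cover $\X$ by $N_n:=\#(\X,d_n,\e)$ balls of $d_n$-radius $\e$, and define a Borel-measurable deterministic quantizer $q:\X\to\X$ sending each point to the center of a ball containing it. The coupling $(X,qX)$ with $X\sim\mu$ realizes $L^\infty$ Bowen distortion at most $\e$, and therefore $L^1$ and $L^p$ distortion at most $\e$, while $I(X;qX)\le \log N_n$. Standard time-averaging and the subadditivity properties of the rate-distortion function then give $R_\mu(\e)\le \frac{1}{n}\log\#(\X,d_n,\e)$ for all $n$; dividing by $|\log\e|$ and passing to the appropriate limit produces the $\le$ halves of all four equalities.

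For the lower bound one must construct, for each small $\e$, an invariant measure $\mu_\e$ whose rate-distortion function at scale $\e$ is close to the covering-number rate. A natural candidate is built as follows: let $E_n$ be a maximal $(n,2\e)$-separated set, so that $|E_n|\ge \#(\X,d_n,4\e)$; form the uniform probability $\nu_n:=|E_n|^{-1}\sum_{x\in E_n}\delta_x$; symmetrize under the dynamics by $\mu_n:=n^{-1}\sum_{i=0}^{n-1} T^i_*\nu_n$; and let $\mu_\e$ be a weak-$*$ subsequential limit along a sequence $n_k\to\infty$ realizing the appropriate $\limsup$ or $\liminf$ in the definition of mean dimension at scale $\e$. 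The target estimate is then
\[
R_{\mu_\e}(c\e)\ \ge\ \frac{1}{n_k}\log|E_{n_k}|\ -\ o(|\log\e|)
\]
for some absolute constant $c>0$, which after division by $|\log\e|$ recovers the desired lower bound.

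The main obstacle is proving this lower bound on $R_{\mu_\e}$. Given any coupling $(X,Y)$ with $X\sim\mu_\e$ and $\E\, d(X,Y)\le c\e$, one must show $I(X;Y)\gtrsim \log|E_{n_k}|$. Because the distortion constraint is imposed only on a time average, $Y$ need not pointwise identify the element of $E_{n_k}$ that produced $X$; a concentration argument isolates a set of indices of large measure on which $Y$ does resolve $T^i X$ within the separation scale, and on those indices a Fano-type inequality extracts the logarithmic gain. The penalty paid on the complementary \emph{bad} indices is proportional to $\log\#(\X,d,\e^{1+\delta})$ for a small $\delta>0$, and this is exactly where the tame growth of covering numbers $\lim_{\e\to 0}\e^\delta\log\#(\X,d,\e)=0$ becomes decisive: it guarantees that the error is $o(|\log\e|)$ and so cannot spoil the logarithmic rate. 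The extension from $L^1$ to $L^p$ rate-distortion then follows by Jensen's inequality applied to the distortion, which rescales $\e$ by a polynomial factor that disappears after division by $|\log\e|$.
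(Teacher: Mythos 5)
Your architecture coincides with the paper's in outline: a deterministic quantizer gives $R_\mu(\e)\le\frac1n\log\#(\X,d_n,\e)$ and hence the ``$\ge$'' halves; empirical measures on maximal separated sets, time-averaged and passed to a weak-$*$ limit, are meant to give the ``$\le$'' halves via a Fano-type inequality; tame growth reconciles max-metric and averaged-metric covering numbers; and H\"older/Jensen plus the $L^\infty$ bound sandwiches $R_{\mu,p}$ between quantities already controlled. However, there is a genuine gap at the heart of the lower-bound direction. Your target inequality $R_{\mu_\e}(c\e)\ge\frac1{n_k}\log|E_{n_k}|-o(|\log\e|)$ is asserted, not derived, and the derivation is the hard part. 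By definition $R_{\mu_\e}(c\e)$ is computed from couplings $(X,Y)$ in which $X$ is distributed according to the \emph{limit} measure $\mu_\e$ and the window is an \emph{arbitrary} finite block $[0,m)$, unrelated to $n_k$; whereas Fano applies to the \emph{uniform} measure on $E_{n_k}$ over the window $[0,n_k)$. Your ``concentration argument isolating good indices'' addresses only the mismatch between average distortion and max-metric separation; it does not address the transfer from $\mu_\e$ on small windows to $\nu_{n_k}$ on large windows. In the paper this transfer occupies essentially all of the proof of Proposition \ref{prop-upper}: one takes a near-optimal coupling $\tau_F$ for $\mu$ on a small window $F$, transports it to the empirical measures via optimal couplings $\pi_{F,n}$ of $(\mathcal P^F_*\mu_n,\mathcal P^F_*\mu)$ for a boundary-null partition $\mathcal P$, assembles block couplings $\sigma_{F_{n_i},g}$ over a tiling of the large window by translates of $F$, and uses the convexity, concavity, sub- and superadditivity of mutual information (Claims \ref{claim-I} and \ref{claim-E}) together with continuity of $I$ under convergence in law to compare the two sides. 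Without some version of this mechanism your key inequality has no proof.

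A secondary point: for the $L^1$ distortion constraint the separated set must be taken with respect to the \emph{averaged} metric $\bar d_n$, not the Bowen max metric $d_n$, or else Fano fails outright (two points $2\e$-separated in $d_n$ can be $\e/n$-close in $\bar d_n$ and thus indistinguishable under average distortion $\e$). This forces the lower bound to come out in terms of $\#(\X,\bar d_n,\cdot)$, and the role of tame growth is then exactly to prove that $\tilde S(\X,G,d,\e)$ and $S(\X,G,d,\e)$ give the same metric mean dimension (Proposition \ref{mdim-equal}); the combinatorial estimate there (covering a $\bar d_n$-ball by $2^n M^{n/L}$ many $d_n$-balls and taking $L=\e^{-\delta}$) is the rigorous form of the ``penalty on bad indices'' you gesture at. Folding that estimate into the Fano step, as you propose, is workable in principle, but it must actually be carried out; as written it is a placeholder. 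The $L^p$ reduction and the quantizer direction are fine.
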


\begin{theorem}[$L^{\infty}$ variational principles, Theorem 9 of \cite{LT2}]
Let $(\X,d,T)$ be a TDS, then
  $${\rm \overline{mdim}_M}(\X,d)=\limsup_{\e\to 0}\frac{\sup_{\mu\in M(\X,T)}R_{\mu,\infty}(\e)}{|\log \e|},$$
$${\rm \underline{mdim}_M}(\X,d)=\liminf_{\e\to 0}\frac{\sup_{\mu\in M(\X,T)}R_{\mu,\infty}(\e)}{|\log \e|}.$$

\end{theorem}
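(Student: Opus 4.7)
The plan is to establish the two matching inequalities between $\limsup_{\e\to 0}\sup_{\mu\in M(\X,T)}R_{\mu,\infty}(\e)/|\log\e|$ and ${\rm\overline{mdim}_M}(\X,d)$; the $\liminf$ statement is handled in parallel by restricting to a subsequence of $\e$ that realises the inferior limit. Throughout, the main tool is the Bowen metric $d_n(x,y)=\max_{0\le i<n}d(T^ix,T^iy)$ together with the two combinatorial quantities $r(\X,d_n,\e)$ (minimal $(n,\e)$-spanning) and $s(\X,d_n,\e)$ (maximal $(n,\e)$-separated). In contrast to the $L^1$ and $L^p$ cases, $L^\infty$-distortion converts minimal covers directly into admissible quantizers, so no tame-growth hypothesis is required.

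For the upper bound, fix $\mu\in M(\X,T)$, $n\ge 1$ and a minimal $(n,\e)$-spanning set $S_n$. Define a Borel map $\pi_n:\X\to S_n$ by nearest-point assignment in $d_n$, so that $d(T^ix,T^i\pi_n(x))\le\e$ for $0\le i<n$. The coupling $(x,\pi_n(x))$ has $L^\infty$-distortion at most $\e$ on the $n$-window, and its Shannon mutual information is at most $H((\pi_n)_\ast\mu)\le\log|S_n|$. Feeding this into the block definition of the rate-distortion function yields $R_{\mu,\infty}(\e)\le\tfrac{1}{n}\log r(\X,d_n,\e)$ for every $n$. Sending $n\to\infty$, then taking $\sup_\mu$, dividing by $|\log\e|$ and applying $\limsup_{\e\to 0}$ gives the inequality $\le{\rm\overline{mdim}_M}(\X,d)$.

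For the lower bound, let $E_n$ be a maximal $(n,\e)$-separated set, form the empirical measure $\nu_n=|E_n|^{-1}\sum_{x\in E_n}\delta_x$ and the orbit-averaged measure $\tilde\mu_n=\tfrac{1}{n}\sum_{i=0}^{n-1}T^i_\ast\nu_n$, and pass to a weak-$\ast$ subsequential limit $\mu\in M(\X,T)$ along the subsequence of $n$'s realising $\limsup_n\tfrac{1}{n}\log s(\X,d_n,\e)$. The essential point is that in any coupling of $\nu_n$ (viewed on the $n$-window) with a reproduction $Y$ of $L^\infty$-distortion at most $\e/3$, every $d_n$-ball of radius $\e/3$ around a value of $Y$ can meet at most one element of the $\e$-separated set $E_n$; hence $X$ is a function of $Y$ and $I(X;Y)\ge H(\nu_n)=\log|E_n|$. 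A lower-semicontinuity argument for the finite-window functional $\mu\mapsto R^{(m)}_{\mu,\infty}(\e/3)$ (Fatou applied to mutual information of discretisations) then propagates this bound from the atomic $\nu_n$ through the orbit average $\tilde\mu_n$ to the weak-$\ast$ limit $\mu$, producing $R_{\mu,\infty}(\e/3)\ge\limsup_n\tfrac{1}{n}\log|E_n|$; dividing by $|\log(\e/3)|\sim|\log\e|$ supplies the matching inequality.

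The main obstacle is the lower bound, and specifically the passage from the atomic combinatorial estimate on the non-invariant $\nu_n$ to the rate-distortion of the genuinely invariant limit $\mu$. The cleanest resolution is a two-step interchange: work with a finite-block proxy $R^{(m)}_{\mu,\infty}$ that is lower semicontinuous in $\mu$, let $n\to\infty$ in the weak-$\ast$ limit first, and only then let $m\to\infty$ in the rate-distortion definition. Subsidiary accounting — the factor $\tfrac{1}{3}$ between the $\e$-net scale and the distortion, the logarithmic gap between $r_n$ and $s_n$, and the interchange of $\limsup_n$ with the coupling infimum defining $R_{\mu,\infty}$ — costs only bounded multiplicative constants and so disappears in the denominator $|\log\e|$ after the outer $\limsup_{\e\to 0}$.
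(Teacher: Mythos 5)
Your upper bound (quantizing onto a minimal $(n,\e)$-spanning set by nearest-point assignment in $d_n$, so the distortion count is zero and $I(X;Y)\le\log r(\X,d_n,\e)$) is correct and is exactly the paper's Lemma A.2. The lower bound, however, contains two genuine gaps. First, you treat the $L^\infty$ distortion constraint as if it read $d_n(X,Y)\le\e/3$ almost surely, whereas the actual definition only requires $\E\bigl(\tfrac1n\#\{i:d(T^iX,Y_i)\ge\e\}\bigr)<\alpha$, with $\alpha\to0$ taken only at the very end. Under the true constraint $Y$ may disagree arbitrarily badly with $X$ on an $\alpha$-fraction of coordinates in expectation, so a point of the reproduction need not isolate a unique element of the max-metric $\e$-separated set $E_n$ (separation only guarantees a gap at \emph{some} coordinate, which may be one of the corrupted ones). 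Hence ``$X$ is a function of $Y$'' fails and $I(X;Y)\ge\log|E_n|$ is unjustified. What is actually needed is the Fano-type estimate (the paper's Lemma A.1), $I(X;Y)\ge\log|S|-nH(\alpha)-\alpha n\log|A|$, which in turn forces one to first compose with a finite partition $\mathcal P$ of diameter $<\e$ so that the alphabet $A$ is finite and the error term $\alpha n\log|A|$ can be killed by letting $\alpha\to0$ last; this is also where the separation constant degrades from $6\e$ to $4\e$ and the final scale becomes $12\e$ rather than your $\e/3$.

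Second, the step you label ``lower semicontinuity / Fatou'' is where essentially all of the difficulty of the theorem lives, and it is not a semicontinuity statement: the implication you need runs from the limit measure back to the empirical measures, not forward. Concretely, one must show that an \emph{arbitrary} admissible coupling $(X,Y)$ for the invariant limit $\mu$ on a fixed $m$-window already has large mutual information. The paper does this by transporting that coupling to $\mu_n$ via optimal couplings $\pi_{m,n}$ of $(\mathcal P^m_*\mu_n,\mathcal P^m_*\mu)$, assembling the transported kernels over a block decomposition (tiling) of the long window into a single kernel $\sigma_n$ against the uniform measure $\nu_n$ on $E_n$, verifying that $\sigma_n$ still satisfies the distortion constraint up to a factor (Claim A.4), and bounding its information using convexity of $I(\mu,\cdot)$, subadditivity over conditionally independent blocks, and concavity of $I(\cdot,\nu)$ to undo the orbit average (Claim 4.6 / inequality (A.7)), before finally applying Fano on the long window. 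None of this is supplied by, or replaceable with, Fatou's lemma applied to discretizations, so as written the lower bound does not go through.
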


Since many classic results including the SMB theorem and the variational principle for entropy have been generalized to actions by more larger class of groups beyond $\Z$ or $\Z^d$, it is natural
to ask whether the above variational principles still hold for such groups. In this paper we will work in the frame of countably infinite amenable group actions and establish the
 corresponding variational principles for amenable metric mean dimension. For the proofs we will follow Lindenstrauss and Tsukamoto's steps. Their proofs reveal that the method by
 Misiurewicz \cite{Mis} for proving the classical variational principle for entropy surprisingly works for metric mean dimension.
 But there are still additional difficulties for amenable group actions: when we construct the related invariant measures, we need some further tiling or quasi-tiling result for amenable groups (Lemma \ref{tiling-quasi2}) to
 produce some specific F{\o}lner sequence (Lemma \ref{lemma-folner}). To avoid complicated technical details, we employ the recent finite tiling result on amenable groups (Downarowicz et. \cite{DHZ}).

We would like to mention here that after Gromov-Lindenstrauss-Weiss's foundation works on mean dimension theory, there are sequences of articles on the theme for amenable mean dimensions.
See, for example \cite{C,CK,D,K1,K2,LiL}. There are also works for sofic group actions beyond amenable group actions \cite{H,Li,LiL2}. It will also be affirmative that whether there
exist variational principles for sofic mean dimensions.

The paper is organized as follows. In section 2, we will briefly recall the preliminaries for countably infinite amenable group including its tiling or quasi-tiling theory. And then prove our Lemma \ref{tiling-quasi2} and \ref{lemma-folner}. In section 3, we will introduce concepts and some properties for amenable metric mean dimensions, mutual information and amenable ($L^1$) rate-distortion functions.
Then in section 4 we will prove our ($L^1$) variational principles for amenable metric mean dimensions (Theorem \ref{theorem-L1}). In section 5, we will consider $L^\infty$ and $L^p$ ($p>1$) rate distortion functions and formulate the corresponding $L^\infty$ and $L^p$ ($p>1$) variational principles. Since the proof is parallel to the $L^1$ variational principles,
we leave it to Appendix A.

\section{Amenable groups and preliminary tiling lemmas}

Recall that a group $G$ is said to be {\it amenable} if there always exists an invariant Borel probability measure when it acts to any compact metric space.
In the case $G$ is a countable discrete group, amenability is equivalent to the existence of
a {\it F{\o}lner sequence}: a sequence of
finite subsets $\{F_n\}$ of $G$  such that
$$\lim_{n\rightarrow+\infty}\frac{|F_n\vartriangle gF_n|}{|F_n|}=0, \text{ for all } g\in G.$$
From now on, we always assume the group $G$ to be a countably infinite amenable group.

Denote by $F(G)$ the collection of nonempty finite subsets of $G$. Let $A,K\in F(G)$ and $\delta>0$. The set $A$ is said to be {\it $(K, \delta)$-invariant} if
\begin{align*}
  \frac{|B(A,K)|}{|A|}<\delta,
\end{align*}
where $B(A,K)$, the {\it $K$-boundary} of $A$, is defined by
$$B(A,K)=\{g\in G: Kg\cap A\neq\emptyset \text { and } Kg\cap(G\setminus A)\neq\emptyset\}.$$

Another equivalent condition for the sequence of finite subsets $\{F_n\}$ of $G$ to be a F{\o}lner sequence is that $\{F_n\}$ becomes more and more
invariant, i.e. for any $\delta>0$ and any finite subset $K$ of $G$, $F_n$ is $(K,\delta)$-invariant
for sufficiently large $n$. One may refer to Ornstein and Weiss \cite{OW} for more details on amenable groups, or Kerr and Li \cite{KL} for reference.


When considering amenable group actions in ergodic theory and dynamical systems, some kinds of ``tiling properties" are strongly involved in most of situations.
Not as good as the groups $\Z$ or $\Z^d$, in general it is still not known whether there always exist tiling F{\o}lner sets for all general amenable groups.
Ornstein and Weiss developed their quasi-tiling theory allowing some errors for the needed tiling properties and then many results for $\Z$ or $\Z^d$ actions can be extended to general amenable groups actions.

Let $\varepsilon\in(0,1)$. $A_1,A_2,\cdots,A_k\in F(G)$ are said to be {\it $\varepsilon$-disjoint} if there exist mutually disjoint $A_i'\subset A_i$ such that  $|A_i'|\ge (1-\e)|A_i|$ for $1\le i\le k$. We say $A_1,A_2,\cdots,A_k$ {\it $\varepsilon$-quasi-tile} $A\in F(G)$ if there exist $C_1,C_2,\cdots,C_k\subset F(G)$ (which are called the tiling centers) such that
\begin{enumerate}
\item for each $1\le i\le k$, $A_iC_i\subset A$ and $A_ic$'s for $c\in C_i$ are $\varepsilon$-disjoint,
\item for $1\le i\le k$, $A_iC_i$'s are mutually disjoint,
\item $|\cup_{i=1}^{k}A_iC_i|>(1-\varepsilon)|A|$.
\end{enumerate}
For $\varepsilon$-quasi-tiling we have the following simple observation:
 \begin{align}\label{ineq-quasi-tiling}
    (1-\varepsilon)|A|<\sum_{i=1}^k|A_i|\cdot|C_i|<\frac{1}{1-\e}|A|.
  \end{align}

The following is a fundamental quasi tiling theorem of amenable groups (see \cite{OW,WZ}).

\begin{theorem}\label{quasi-tiling}
Let $G$ be an amenable group and $\{e\}\subset F_1\subset F_2\subset\cdots$ be a F{\o}lner sequence in $G$. Then for
any $0<\epsilon<\frac{1}{4}$ and any integer $N>0$, there exist integers $N\le n_1<n_2<\cdots<n_k$ such that any $F_M$ (M sufficiently large)
can be $\epsilon$-quasi-tiled by $F_{n_1},F_{n_2},\cdots, F_{n_k}$.
\end{theorem}

\begin{remark}
  In the above theorem, the restriction on the F{\o}lner sequence $\{F_n\}$ can be removed (see \cite{ZCY}, Proposition 1) and the set $F_M$ can be replaced by
  any sufficiently invariant finite set (see \cite[Theorem 4.36]{KL} for reference).
\end{remark}


Recently, Downarowicz etc \cite{DHZ} proved a finite tiling result for general amenable groups. With the help of their result,
some of the proofs obtained from the quasi-tiling techniques can be simplified.

In the next let us recall the finite tiling result of Downarowicz etc \cite{DHZ}.

We call $\mathcal{T}\subset F(G)$ a {\it tiling} if $\mathcal{T}$ forms a partition of $G$. An element in a tiling $\mathcal{T}$ is called
a $\mathcal{T}$-tile or tile.
A tiling $\mathcal{T}$ is said to be finite if there exists a finite collection $\mathcal{S}=\mathcal{S}(\mathcal{T})=\{S_1, S_2,\ldots,S_k\}$ of $F(G)$,
which is called the shapes of $\mathcal{T}$, such that each element in $\mathcal{T}$ is a translation of some set in $\mathcal{S}$.
For convenience, we always assume that the shapes $\mathcal{S}$ has minimal cardinality, i.e. any set in $\mathcal{S}$ cannot
be a translation of others. Moreover, through some suitable translation, we can assume each set in $\mathcal{S}$ contain $e_G$.

Let $S$ be a shape of a finite tiling $\mathcal{T}$, the center of shape $S$ is the set $C(S)=\{c\in G: Sc\in \mathcal{T}\}$. For convenience,
we need $C(S)$ to be nonempty for each shape $S$.
We also require the centers $C(S)$'s satisfy that $Sc$'s are disjoint for $c\in C(S)$ and $S\in\mathcal{S}$.


For a tiling $\mathcal{T}$ with shapes $\mathcal{S}$, we can define a subshift $X_{\mathcal{T}}$ of $(\mathcal{S}\cup\{0\})^G$ by
$$X_{\mathcal{T}}=\overline{\bigcup_{g\in G}\{gx\}},$$
where $x=(x_g)_{g\in G}$ is defined by
\begin{align*}
  x_g=\begin{cases}
    S, \text{ if }g\in C(S),\\
    0, \text{ otherwise},
  \end{cases}
\end{align*}
i.e., $x$ is a transitive point of the subshift $X_{\mathcal{T}}$. We recall here that the shift action is defined by $(hx)_g=x_{gh}$ for $g,h\in G$.

Let $\mathcal{T}$ be a finite tiling of a countably infinite amenable group $G$. Denote by $h(\mathcal{T})=h_{top}(X_{\mathcal{T}},G)$, the topological entropy of the associated subshift $(X_{\mathcal{T}},G)$. The following is Theorem 5.2 of \cite{DHZ} by Downarowicz etc. Recall that a sequence of tiles $(\mathcal T_k)_{k\ge 1}$ is said to be {\it congruent} if for each
$k\ge 1$, every tile of $\mathcal T_{k+1}$ equals a union of tiles of $\mathcal T_k$.
\begin{theorem}
Let $G$ be a countably infinite amenable group. Fix a converging to zero sequence $\e_k>0$
and a sequence $K_k$ of finite subsets of $G$. There exists a congruent sequence of finite tilings ${\mathcal{T}}_k$ of $G$ such that the shapes
of $\mathcal{T}_k$ are $(K_k,\e_k)$-invariant and $h(\mathcal{T}_k)=0$ for each $k$.
\end{theorem}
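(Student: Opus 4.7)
The plan is to construct the sequence $(\overline{\mathcal{T}}_k)_{k\ge 1}$ by recursion, taking $\overline{\mathcal{T}}_{k+1}$ as a coarsening of $\overline{\mathcal{T}}_k$ instead of building each tiling independently. The main tool is the Ornstein--Weiss quasi-tiling theorem, which supplies, for any finite $K\subset G$ and any $\delta>0$, a finite collection of $(K,\delta)$-invariant shapes whose translates cover $G$ with arbitrarily small overlap and arbitrarily small leftover. The first technical step is to convert such a quasi-tiling into an exact partition by absorbing the overlaps and leftover gaps into neighbouring tiles, at the cost of a controlled deterioration of the invariance.

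For the base case, I would choose $\delta_1\ll\e_1$, apply Ornstein--Weiss with parameters $(K_1,\delta_1)$ to obtain shapes, and then eliminate overlaps and gaps as above. Because the defects are of order $\delta_1$, the enlarged shapes remain $(K_1,\e_1)$-invariant, yielding $\overline{\mathcal{T}}_1$. For the inductive step, given $\overline{\mathcal{T}}_k$, I would run a fresh quasi-tiling at a scale $F$ that is both much larger than the shapes of $\mathcal{S}_k$ and extremely $(K_{k+1},\delta_{k+1})$-invariant; each new shape is then \emph{snapped} to a union of $\overline{\mathcal{T}}_k$-tiles by annexing or discarding every $\overline{\mathcal{T}}_k$-tile that meets its boundary. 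Since the $\overline{\mathcal{T}}_k$-tiles are negligibly small compared with $F$, the snap only modifies $F$ inside its $K_{k+1}$-boundary, so the result is still $(K_{k+1},\e_{k+1})$-invariant. This simultaneously delivers congruence and invariance.

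For the zero-entropy requirement, I would design the construction so that the location of every tile of $\overline{\mathcal{T}}_k$ is determined by the pattern of coarser tiles $\overline{\mathcal{T}}_{k+1},\overline{\mathcal{T}}_{k+2},\ldots$ lying around it --- for instance by fixing, inside each shape $S\in\mathcal{S}_{k+1}$, the precise relative positions of its $\mathcal{S}_k$-subtiles. Under this rigidity, the subshift $X_{\overline{\mathcal{T}}_k}$ acquires a hierarchical block structure whose pattern count along any F{\o}lner sequence $\{F_n\}$ grows subexponentially in $|F_n|$, giving $h(\overline{\mathcal{T}}_k)=0$.

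The hardest step is the simultaneous control of all three requirements: tuning the parameters $\delta_k$, $K_k$ and the diameters of $\mathcal{S}_k$ so that the snap operation preserves $(K_{k+1},\e_{k+1})$-invariance without destroying the self-similar structure needed for zero entropy. I expect the argument to rest on the syndeticity of the centre sets $C(S)$ that is built into the Ornstein--Weiss construction, together with a diagonal choice of parameters at each level that anticipates all future coarsenings, so that the shapes of $\mathcal{S}_k$ line up correctly when one passes from $\overline{\mathcal{T}}_k$ to $\overline{\mathcal{T}}_{k+1}$.
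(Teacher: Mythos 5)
First, a framing remark: the paper does not prove this statement at all --- it is quoted verbatim as Theorem 5.2 of Downarowicz--Huczek--Zhang \cite{DHZ} --- so what you are attempting is a reconstruction of the main theorem of that cited paper. Your overall architecture (Ornstein--Weiss quasi-tilings, a recursive coarsening to obtain congruence, a deterministic hierarchical rule to kill entropy) is indeed the strategy of \cite{DHZ}, but the central difficulty is buried in one sentence of your sketch: ``convert such a quasi-tiling into an exact partition by absorbing the overlaps and leftover gaps into neighbouring tiles.'' For a general countable amenable group this is not a routine repair. The leftover set has small density but is spread throughout $G$, and if its points are annexed to nearby tiles by any naive rule, the resulting partition need not have tiles drawn from a \emph{finite} list of shapes up to translation --- yet finiteness of the shape set is exactly what the theorem asserts and what the rest of the present paper relies on. Producing exact tilings of an arbitrary amenable group by finitely many $(K,\e)$-invariant shapes was an open problem precisely because this absorption step fails naively; solving it is the main content of \cite{DHZ}. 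Your inductive ``snap'' does not rescue the base case, since at the bottom of the recursion the only tiling available to snap to is the tiling by singletons, which imposes no constraint, so the first exact finite tiling must be produced from scratch. Even at the inductive step you need a translation-consistent assignment rule (adjacent new tiles must not both claim, or both reject, the same $\overline{\mathcal T}_k$-tile, and rejected tiles must land somewhere), which is again where the real work lies.

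The zero-entropy claim also has a gap. Making $\overline{\mathcal T}_k$ a deterministic function of $\overline{\mathcal T}_{k+1}$ only yields $h(\mathcal T_k)\le h(\mathcal T_{k+1})$, an inequality pointing the wrong way along the hierarchy. To conclude $h(\mathcal T_k)=0$ you must (i) realize each $\mathcal T_k$ as a topological factor of $\mathcal T_m$ for \emph{every} $m>k$, which forces the determinism to be implemented by sliding block codes compatible with all future coarsenings simultaneously (your ``diagonal choice of parameters'' gestures at this but specifies nothing), and (ii) show $h(\mathcal T_m)\to 0$ as $m\to\infty$, e.g.\ via an estimate of the form
\begin{align*}
h(\mathcal T_m)\le H\Bigl(\tfrac{1}{N_m}\Bigr)+\frac{\log l_m}{N_m},\qquad N_m=\min_{S\in\mathcal S(\mathcal T_m)}|S|,
\end{align*}
which in turn requires keeping the number of shapes $l_m$ from growing too fast relative to $N_m$ --- a constraint your construction never tracks. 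As it stands the proposal is a plausible roadmap that correctly identifies the ingredients, but the two steps on which the theorem actually turns (exactness with finitely many shapes, and the quantitative entropy bound along the hierarchy) are asserted rather than proved.
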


In the present paper, we just need to use the following extract which is taken from Theorem 4.3 of Downarowicz etc \cite{DHZ}, a weaker version of the above theorem.

\begin{theorem}\label{tiling2}
For any $\e>0$ and $K\in F(G)$. There exists a finite tiling $\mathcal T$ of $G$, such that every shape of $\mathcal T$ is $(K,\e)$-invariant.
\end{theorem}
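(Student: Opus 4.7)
The plan is to obtain Theorem \ref{tiling2} as an immediate corollary of the congruent finite tiling theorem of Downarowicz--Huczek--Zhang quoted just above. Given $\e>0$ and $K\in F(G)$, I would choose any sequence $\e_k\downarrow 0$ with $\e_1=\e$ (for instance $\e_k=\e/k$) and any sequence of finite subsets $K_k\subset G$ with $K_1=K$ (say $K_k=K$ for all $k$), apply the stronger theorem to produce a congruent sequence of finite tilings $(\overline{\mathcal T}_k)_{k\ge 1}$ whose shapes are $(K_k,\e_k)$-invariant, and simply take $\mathcal T=\mathcal T_1$. The congruence of the sequence and the vanishing topological entropy $h(\mathcal T_k)=0$ play no role here; only the $(K,\e)$-invariance of the shapes of $\mathcal T_1$ is required, and that is delivered directly.

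For a self-contained argument that follows the original proof of Theorem 4.3 of \cite{DHZ} rather than extracting from the stronger Theorem 5.2, I would proceed in three steps. First, fix a F{\o}lner sequence $\{F_n\}$ in $G$ so that each $F_n$ becomes arbitrarily $(K,\delta)$-invariant as $n\to\infty$. Second, invoke the Ornstein--Weiss $\e$-quasi-tiling theorem: for a suitable finite choice of indices $n_1<\dots<n_r$ and center sets $C_i\subset G$, the translates $\{F_{n_i}c:c\in C_i,\ 1\le i\le r\}$ are $\e$-disjoint and cover all but an $\e$-fraction of any sufficiently invariant finite subset of $G$. These translates form a quasi-tiling whose shapes are $(K,\e)$-invariant by construction.

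The main obstacle, which constitutes the real content of Theorem 4.3 in \cite{DHZ}, is to promote this $\e$-quasi-tiling into an exact tiling of $G$ by finitely many $(K,\e)$-invariant shapes (possibly at the cost of slightly shrinking or enlarging the shapes, while preserving invariance). I would approach this via a compactness argument in the symbolic space $(\mathcal S\cup\{0\})^G$: assemble partial tilings over an exhausting sequence of finite subsets of $G$, with shapes drawn from a fixed finite family of $(K,\e)$-invariant sets, take a weak cluster point, and verify that the resulting configuration is an honest partition of $G$. Since this promotion step is delicate and already carried out in full in \cite{DHZ}, the cleanest route in the present paper is simply the specialization described in the first paragraph.
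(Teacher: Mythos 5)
Your first paragraph is exactly what the paper does: Theorem \ref{tiling2} is not proved in the paper but is quoted as a weaker extract of the Downarowicz--Huczek--Zhang theorem stated just above it (Theorem 4.3/5.2 of \cite{DHZ}), and your specialization $\e_1=\e$, $K_1=K$, $\mathcal T=\mathcal T_1$ is a correct and complete way to obtain it from that quoted result. The additional sketch of the quasi-tiling route is fine as background but, as you yourself note, the promotion from an $\e$-quasi-tiling to an exact finite tiling is the real content of \cite{DHZ} and is rightly left to that reference.
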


Recall that a F{\o}lner sequence $\{F_n\}$ in $G$ is said to be {\it tempered} if there exists a constant $C$ which is independent of $n$ such that
\begin{align}\label{tempered}
|\bigcup_{k<n}F_k^{-1}F_n|\le C|F_n|, \text{ for any }n.
\end{align}
Note that every F{\o}lner sequence $F_n$ has a tempered subsequence and in particular, every amenable group has a tempered F{\o}lner sequence (see Proposition 1.4 of Lindenstrauss \cite{L2}).

The following is the pointwise ergodic theorem for amenable group actions (Lindenstrauss \cite[Theorem 1.2]{L2}, see also Weiss \cite{W}).

\begin{theorem}[Pointwise Ergodic Theorem]\label{th-3-1}Let $(X,G,\mu)$ be an ergodic $G-$system, $\{F_n\}$ be a tempered F{\o}lner sequence in $G$ and $f\in L^1(X,\mathcal{B},\mu)$. Then
$$\lim_{n\rightarrow+\infty}\frac{1}{|F_n|}\sum_{g\in F_n}f(gx)=\int_{X}f(x)d\mu,$$
almost everywhere and in $L^1$.
\end{theorem}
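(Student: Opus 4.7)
The plan is to follow Lindenstrauss's strategy: reduce pointwise a.e.\ convergence to (i) mean convergence on a dense subclass together with (ii) a weak-type $(1,1)$ maximal inequality for the averaging operators $A_{F_n}f(x)=\frac{1}{|F_n|}\sum_{g\in F_n}f(gx)$. In the amenable setting, (i) is essentially routine from the F{\o}lner property, while (ii) is where the temperedness hypothesis \eqref{tempered} enters crucially.

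\textbf{Step 1 ($L^2$ mean ergodic theorem).} First I would show that for $f\in L^2(X,\mu)$ the averages $A_{F_n}f$ converge in $L^2$ to the orthogonal projection $Pf$ onto $G$-invariant functions. The F{\o}lner condition gives $\|A_{F_n}(\phi-\phi\circ h)\|_2\to 0$ for all $h\in G$ and $\phi\in L^2$. Combined with unitarity of the action and the orthogonal decomposition of $L^2$ into invariants and the closed span of coboundaries, this yields $A_{F_n}f\to Pf$ in $L^2$; ergodicity forces $Pf=\int f\,d\mu$. This step uses only that $\{F_n\}$ is F{\o}lner, not tempered.

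\textbf{Step 2 (maximal inequality).} Set $f^*(x)=\sup_n A_{F_n}|f|(x)$. I would prove
\[
\mu\{x\in X:f^*(x)>\lambda\}\le \frac{C'}{\lambda}\|f\|_1,
\]
where $C'$ depends only on the temperedness constant $C$ in \eqref{tempered}. The core is a Vitali-type covering lemma: given a finite test set with an assignment $x\mapsto n(x)$, one can extract from $\{F_{n(x)}x\}$ a subcollection with controlled overlap whose union captures a definite fraction of the original union. The bound $|\bigcup_{k<n}F_k^{-1}F_n|\le C|F_n|$ lets one process the selected sets from largest to smallest scale and control how much of each new set was already shadowed. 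Transferring this combinatorial covering back to the dynamical system via a Calder\'on-style argument over a sufficiently $(K,\delta)$-invariant window then produces the maximal inequality.

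\textbf{Step 3 (Banach principle).} Let $\mathcal{C}\subseteq L^1(X,\mu)$ be the set of $f$ for which $A_{F_n}f\to\int f\,d\mu$ almost everywhere. The maximal inequality implies $\mathcal{C}$ is closed in $L^1$; Step 1 combined with a routine passage to a full-measure set through a diagonal subsequence shows $\mathcal{C}\supseteq L^\infty(X,\mu)$. Density of $L^\infty$ in $L^1$ gives $\mathcal{C}=L^1$. The $L^1$-norm convergence in the statement follows from a truncation argument plus dominated convergence.

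The main obstacle is Step 2: on a general countable amenable group there is no geometric Vitali lemma, and the only structure available is the combinatorial temperedness inequality. Lindenstrauss realizes the covering via a probabilistic argument in which a random subfamily is extracted from the tail of the sequence, and an expectation/union-bound computation extracts the required packing efficiency from \eqref{tempered}. Everything else is standard amenable ergodic theory; this probabilistic covering is the genuinely delicate point and the one that distinguishes the amenable case from the $\mathbb{Z}$ or $\mathbb{Z}^d$ case.
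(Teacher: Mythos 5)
You should first note that the paper does not prove this statement at all: Theorem \ref{th-3-1} is quoted verbatim as Theorem 1.2 of Lindenstrauss \cite{L2} and used as a black box, so there is no internal proof to match your outline against. Judged on its own, your sketch correctly reproduces the architecture of Lindenstrauss's argument (mean ergodic theorem on a dense subclass, a weak-type $(1,1)$ maximal inequality driven by temperedness, then the Banach principle), but it is an outline rather than a proof. The entire difficulty of the theorem lives in your Step 2, and there you only describe what one ``would'' do: the random selection from the tail of the sequence, the expectation estimate showing the selected translates $\{F_{n(x)}x\}$ cover a fixed fraction of the union with overlap controlled by the constant $C$ in \eqref{tempered}, and the transference of that finite combinatorial statement back to $(X,\mu)$ are all asserted, not carried out. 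Since you yourself identify this as ``the genuinely delicate point,'' leaving it unexecuted means the proof is not complete.

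There is also a concrete error in Step 3 as written. From the $L^2$ mean ergodic theorem you cannot conclude $\mathcal{C}\supseteq L^\infty(X,\mu)$ by passing to a.e.\ convergent subsequences: norm convergence yields almost-everywhere convergence only along a subsequence of $\{F_n\}$, and no diagonal argument recovers convergence along the full sequence for a fixed bounded $f$. The correct dense subclass is the span of the constants together with bounded coboundaries $\phi-\phi\circ h$ with $\phi\in L^\infty$; for these,
$$\Big|\frac{1}{|F_n|}\sum_{g\in F_n}\big(\phi(gx)-\phi(hgx)\big)\Big|\le \|\phi\|_\infty\,\frac{|F_n\vartriangle hF_n|}{|F_n|}\longrightarrow 0$$
everywhere by the F{\o}lner property alone, and this subclass is dense in the orthocomplement of the invariants (hence, by ergodicity, its translates by constants are dense in $L^1$). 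With that substitution, and with Step 2 actually proved, the Banach-principle closing argument goes through as you describe.
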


Let $\mathcal{T}$ be a tiling of $G$ with shapes $\mathcal{S}=\{T_1,\ldots,T_l\}$. For $F\in F(G)$, $1\le j\le l$,
denote by $$\rho_{\mathcal T}(T_j,F)=\frac{1}{|F|}\#\{c\in G: T_jc\in \mathcal T \text{ and } T_jc\subset F\}|T_j|,$$
the density or the portion of tiles of $\mathcal{T}$ with shape $T_j$ completely contained in $F$.
Clearly, $\sum_{j=1}^l\rho_{\mathcal T}(T_j,F)\le 1$. But $\lim_{n\rightarrow\infty}\rho_{\mathcal T}(T_j,F_n)$ may not exist
for every F{\o}lner sequences $\{F_n\}$.

\begin{lemma}\label{tiling-quasi2}
  Let $\{F_n\}$ be any tempered F{\o}lner sequence in $G$. For any $K\in F(G)$ and $0<\e<\frac{1}{2}$, there exists a finite tiling $\mathcal T=\mathcal T(K,\e)$ of $G$ such that
  \begin{enumerate}
    \item $\mathcal T$ has shapes $T_1,T_2,\ldots,T_l$ and each shape is $(K,\e)$-invariant;
    \item for sufficiently large $n\in\N$, for $1\le j\le l$,
    there exists $\tilde F_n\subset F_n$ with $|\tilde F_n|>(1-\e)|F_n|$ such that
    $$\bigg|\frac{1}{|F_n|}\sum_{g\in F_n}1_{C_jg^{-1}\cap F_n}(h)-\frac{\rho_{\mathcal T}(T_j,F_n)}{|T_j|}\bigg|< \e\frac{\rho_{\mathcal T}(T_j,F_n)}{|T_j|},$$ for all $h\in \tilde F_n$,
    where $C_j=C(T_j)$ is the center of the shape $T_j$.
  \end{enumerate}
\end{lemma}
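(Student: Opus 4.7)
The plan is to obtain $\mathcal T$ from Theorem~\ref{tiling2} and then control the local distribution of the center sets $C_j$ via the pointwise ergodic theorem together with a Chebyshev-type variance estimate.

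First, I would apply Theorem~\ref{tiling2} with parameters $(K',\e')$ where $K' \supset K \cup \bigcup_{j}(T_j \cup T_j^{-1})$ and $\e' \ll \e$ are chosen to absorb the boundary errors appearing later. This produces a finite tiling $\mathcal T$ with shapes $T_1,\dots,T_l$ that are $(K',\e')$-invariant, hence in particular $(K,\e)$-invariant, giving~(1). For~(2), unpacking the indicator gives, for $h\in F_n$,
\[
\sum_{g\in F_n}1_{C_jg^{-1}\cap F_n}(h)=|\{g\in F_n:hg\in C_j\}|=|h^{-1}C_j\cap F_n|=|C_j\cap hF_n|,
\]
so the target reduces to: for most $h\in F_n$, $|C_j\cap hF_n|/|F_n|$ is close to $\rho_{\mathcal T}(T_j,F_n)/|T_j|$. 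A short boundary estimate using the $(K',\e')$-invariance of $T_j$ and the Følner property of $F_n$ identifies the latter quantity with $|C_j\cap F_n|/|F_n|$ up to a relative error of order $\e'$, so it suffices to prove $|C_j\cap hF_n|\approx|C_j\cap F_n|$ for most $h\in F_n$.

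For this I would pass to the subshift $(X_{\mathcal T},G)$. Let $\phi_j(y)=1_{\{y_{e_G}=T_j\}}$ and let $\mu$ be any ergodic $G$-invariant measure on $X_{\mathcal T}$, so that $p_j:=\int\phi_j\,d\mu$ is the global density of shape-$T_j$ centers. Theorem~\ref{th-3-1} (Lindenstrauss' pointwise ergodic theorem, applicable since $\{F_n\}$ is tempered) yields $\mu$-a.e.\ convergence of $\frac{1}{|F_n|}\sum_{g\in F_n}\phi_j(gy)$ to $p_j$; transferring this to the orbit of the generic tiling point $x\in X_{\mathcal T}$ controls the averages appearing in the lemma. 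To convert $\mu$-a.e.\ convergence into a quantitative ``all $h$ outside a small set'' statement, I would estimate the second moment
\[
\frac{1}{|F_n|}\sum_{h\in F_n}\biggl(\frac{|h^{-1}C_j\cap F_n|}{|F_n|}-\frac{|C_j\cap F_n|}{|F_n|}\biggr)^2
\]
by interchange of summation, combining the Følner property of $F_n$ with the structural control on $C_j$ coming from the tiling. Chebyshev's inequality then supplies $\tilde F_n\subset F_n$ of size at least $(1-\e)|F_n|$ on which the stated deviation bound holds.

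The principal obstacle is this variance bound. A typical $h\in F_n$ can be large enough that $hF_n$ differs substantially from $F_n$, so one cannot naively compare the two center counts by the Følner property alone. What saves the argument is the rigidity of $\mathcal T$: the $(K',\e')$-invariance of its shapes, together with the zero-entropy property $h(\mathcal T)=0$ provided by the finer tiling theorem in \cite{DHZ}, forces the centers $C_j$ to be asymptotically equidistributed, which is exactly what the variance estimate captures. With $\e'\ll\e$ and $K'$ chosen as above, careful boundary bookkeeping then completes~(2) for sufficiently large~$n$.
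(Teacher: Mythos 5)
Your general framework --- pass to the subshift generated by the tiling, take an ergodic invariant measure $\mu$, and apply Lindenstrauss' pointwise ergodic theorem along the tempered sequence $\{F_n\}$ --- is the right one, and your unpacking of the indicator sum as $|C_j\cap hF_n|$ is a correct reading of the statement. But there is a fatal gap: you fix $\mathcal T$ to be the tiling produced by Theorem \ref{tiling2} and then try to verify (2) for \emph{its} centers. The pointwise ergodic theorem only controls $\frac{1}{|F_n|}\sum_{g\in F_n}\phi_j(gy)$ for $\mu$-a.e.\ $y$, and the distinguished transitive point encoding your $\mathcal T$ has no reason to be $\mu$-generic; ``transferring this to the orbit of the generic tiling point'' is precisely the step that is missing. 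Worse, the conclusion is simply false for an arbitrary output of Theorem \ref{tiling2}: already in $\Z$ one can tile by two long intervals arranged in exponentially growing alternating blocks $[a_m,a_{m+1})$ with $a_{m+1}\approx 2a_m$, and then for $F_n=[0,a_{2k})$ the density of shape-$1$ centers in $F_n$ and in $hF_n$ differ by a definite constant factor (about $3/4$) for a positive proportion of $h\in F_n$. Such a tiling has $(K,\e)$-invariant shapes and a low-complexity (zero-entropy) orbit closure, so neither shape-invariance nor $h(\mathcal T)=0$ ``forces the centers to be asymptotically equidistributed.'' Consequently your variance bound --- which you yourself flag as the principal obstacle --- is not just unproved but unprovable in the generality you need; this is where the argument breaks.

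The paper's proof resolves exactly this by \emph{replacing} the tiling. It applies the pointwise ergodic theorem twice: once to $f_j=1_{\{y_{e_G}=T_j\}}$ to obtain (after an Egorov-type step) a set $X_0$ of measure $>1-\e$ on which the averages $\frac{1}{|F_n|}\sum_{g\in F_n}f_j(gy)$ have stabilized near $t_j=\int f_j\,d\mu$ for all $n>N_0$, and once to $1_{X_0}$ to obtain a set $X_1$ on which $\frac{1}{|F_n|}\sum_{g\in F_n}1_{X_0}(gx)>1-\e$. It then takes $\mathcal T$ to be the tiling encoded by a point $x\in X_0\cap X_1$ (same shapes, different tiling), sets $\tilde F_n=\{h\in F_n: hx\in X_0\}$, and reads off (2) directly; no Chebyshev or second-moment estimate is needed, and the deviation bound for each individual $h\in\tilde F_n$ comes from $hx\in X_0$ rather than from an averaged variance. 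To repair your write-up you must make this replacement explicit (the statement permits it, since $\mathcal T$ only needs to exist) and substitute the second application of the ergodic theorem for the variance step.
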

\begin{proof}
  By Theorem \ref{tiling2}, there exists a finite tiling $\mathcal T'$ with finite many shapes each of which is $(K,\e)$-invariant. Let $(X_{\mathcal T'},G)$ be the associated subshift. By choosing a minimal point from $(X_{\mathcal T'},G)$, we can make a new finite tiling and still denote it and the associated subshift by $\mathcal T'$ and $(X_{\mathcal T'},G)$, respectively.

  Let $T_1,T_2,\ldots,T_l$ be the shapes of $\mathcal T'$ and $\mu$ be a $G$-invariant ergodic measure of $(X_{\mathcal T'},G)$. For $x=(x_g)_{g\in G}\in X_{\mathcal T'}$ and each $j=1,2,\ldots,l$, define
  \begin{align*}
    f_j(x)=\begin{cases}
      1, \text{ if } x_{e_G}=T_j;\\
      0, \text{ otherwise. }
    \end{cases}
  \end{align*}

  By the pointwise ergodic theorem, for $\mu$-a.e. $x\in X_{\mathcal T'}$,
  $$\lim_{n\rightarrow \infty}\frac{1}{|F_n|}\sum_{g\in F_n}f_j(gx)=\int f_j(x)d\mu:=t_j.$$
  Note that since $(X_{\mathcal T'},G)$ is minimal, $t_j>0$. Hence for sufficiently large $N_0\in \N$,
  $$\mu\bigg(\big \{x\in X_{\mathcal T'}: \text{ for any }n>N_0, \big|\frac{1}{|F_n|}\sum_{g\in F_n}f_j(gx)-t_j\big|<\frac{\e}{6}t_j\big \}\bigg)>1-\e.$$
  Denote by $X_0$ the set in the left-hand side of the above inequality. Applying the pointwise ergodic theorem again, there exists $N_1>N_0$
  such that for any $n>N_1$, it holds that
  $$\mu\bigg(\big\{x\in X_{\mathcal T'}:\text{ for any }n>N_1, \frac{1}{|F_n|}\sum_{g\in F_n}1_{X_0}(gx)>1-\e\big\}\bigg)>1-\e.$$
  Denote by $X_1$ the set in the left-hand side of the above inequality.
  Now we choose $x\in X_0\cap X_1$ and let $\mathcal T$ be the finite tiling generated by $x$. Since $(X_{\mathcal T'},G)$ is minimal, $\mathcal T$ still has the same shapes $T_1,T_2,\ldots,T_l$ as $\mathcal T'$.

  Since the tiling $\mathcal T$ is generated by $x$, there exists $N_2>N_1$ such that whenever $n>N_2$, it holds that
  \begin{align}\label{ineq-F0}
    \bigg|\frac{1}{|F_n|}\sum_{g\in F_n}f_j(gx)-\frac{\rho_{\mathcal T}(T_j,F_n)}{|T_j|}\bigg|<\frac{\e}{6}t_j.
  \end{align}

  Now let $n>N_2$. Since $x\in X_0\cap X_1$, we have that
  \begin{align}\label{ineq-F1}
    \bigg|\frac{1}{|F_n|}\sum_{g\in F_n}f_j(gx)-t_j\bigg|<\frac{\e}{6}t_j
  \end{align}
  and
  \begin{align}\label{ineq-F2}
    \frac{1}{|F_n|}\sum_{g\in F_n}1_{X_0}(gx)>1-\e.
  \end{align}

  Joint \eqref{ineq-F0} and \eqref{ineq-F1} together, it holds that
  \begin{align}\label{ineq-F3}
    \frac{t_j}{2}<\frac{\rho_{\mathcal T}(T_j,F_n)}{|T_j|}<(1+\frac{\e}{6})t_j.
  \end{align}

  Let $\tilde F_n=\{h\in F_n: hx\in X_0\}$. Then by \eqref{ineq-F2}, $|\tilde F_n|> (1-\e)|F_n|$.

  For each $h\in\tilde F_n$, since $hx\in X_0$, it holds that
   $$\bigg|\frac{1}{|F_n|}\#\{g\in F_n: (hx)_g=x_{gh}=T_j\}-t_j\bigg|=\bigg |\frac{1}{|F_n|}\sum_{g\in F_n}f_j(ghx)-t_j\bigg |<\frac{\e}{6}t_j.$$
  Note that $x_{gh}=T_j$ if and only if $gh\in C_j$, i.e. $h\in C_jg^{-1}$.
  Hence
  $$\bigg|\frac{1}{|F_n|}\sum_{g\in F_n}1_{C_jg^{-1}\cap F_n}(h)-t_j\bigg|< \frac{\e}{6}t_j,$$ for all $h\in \tilde F_n$.

  Then whenever $n>N_2$, we have
  $$\bigg|\frac{1}{|F_n|}\sum_{g\in F_n}1_{C_jg^{-1}\cap F_n}(h)-\frac{\rho_{\mathcal T}(T_j,F_n)}{|T_j|}\bigg|< \frac{\e}{3}t_j<\e\frac{\rho_{\mathcal T}(T_j,F_n)}{|T_j|},$$ for all $h\in \tilde F_n$.

\end{proof}

\begin{remark}
\begin{enumerate}
  \item Here we need the F{\o}lner sequence $\{F_n\}$ to be tempered since we apply the pointwise ergodic theorem.
  \item From the proof of Lemma \ref{tiling-quasi2}, we can see that the shapes $T_1,T_2,\ldots,T_l$ do not depend on the given F{\o}lner sequence $\{F_n\}$,
  although the tiling $\mathcal T$ itself does depend on $\{F_n\}$.
\end{enumerate}

\end{remark}

With the help of Lemma \ref{tiling-quasi2}, we can construct a specific F{\o}lner sequence of $G$, which plays a crucial role for proving the variational principles.

\begin{lemma}\label{lemma-folner}
Let $\{H_n\}$ be any tempered F{\o}lner sequence of $G$. There exists a F{\o}lner sequence $\{F_n\}$ of $G$ (independent on $\{H_n\}$), such that
for any $e_G\in K\in F(G)$ and $0<\e<\frac{1}{2}$, there is a finite tiling $\mathcal{T}$ of $G$ satisfying the following:
\begin{enumerate}
  \item $\mathcal{T}$ has shapes $\{F_{m_1},\ldots,F_{m_l}\}$ consisted with F{\o}lner sets in $\{F_n\}$ each of which
  is $(K,\e)$-invariant;
  \item let $C_j$ be the center of the shape $F_{m_j}$ for each $1\le j\le l$, then the family of sets $\{C_jg^{-1}\}_{g\in H_n}$ covers a subset $\tilde{H}_n\subset H_n$ with
  $|\tilde{H}_n|>(1-\e)|H_n|$ at most $(1+\e)\rho_{\mathcal T}(F_{m_j},H_n)\frac{|H_n|}{|F_{m_j}|}$-many times, whenever $n$ is sufficiently large.
\end{enumerate}
\end{lemma}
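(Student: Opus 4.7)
The plan is to build $\{F_n\}$ once and for all as a master list containing every shape the tiling machinery of Lemma \ref{tiling-quasi2} might ever need, and then, for any given $(K,\e)$, read the relevant shapes off this list. Fix an exhaustion $\{e_G\}=K_1\subset K_2\subset\cdots$ of $G$ by finite sets with $\bigcup_k K_k=G$ and a sequence $\e_k\searrow 0$ (for instance $\e_k=1/(k+2)$). By Theorem \ref{tiling2}, for each $k$ fix a finite collection of shapes $\mathcal S_k=\{S_1^{(k)},\dots,S_{l_k}^{(k)}\}$ realising a finite tiling of $G$ with every shape $(K_k,\e_k)$-invariant; the remark following Lemma \ref{tiling-quasi2} then ensures that, regardless of the input Følner sequence, the tiling $\mathcal T$ produced by Lemma \ref{tiling-quasi2} with parameters $(K_k,\e_k)$ uses only shapes drawn from $\mathcal S_k$. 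Enumerate the disjoint union $\bigsqcup_k\mathcal S_k$ in blocks, listing the members of $\mathcal S_1$ first, then those of $\mathcal S_2$, and so on, to form the sequence $\{F_n\}_{n\ge 1}$. It is independent of $\{H_n\}$; and because the tails inherit increasingly good invariance, any fixed $(K,\delta)$ is beaten by $(K_k,\e_k)$ once $k$ is large, so $\{F_n\}$ is a Følner sequence.

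Given a pair $(K,\e)$ with $e_G\in K$ and $0<\e<\tfrac12$, choose $k$ with $K\subset K_k$ and $\e_k<\e$, and apply Lemma \ref{tiling-quasi2} to the tempered Følner sequence $\{H_n\}$ with parameters $(K_k,\e_k)$. This produces a finite tiling $\mathcal T$ of $G$ whose shapes form a subfamily $\{F_{m_1},\dots,F_{m_l}\}$ of $\mathcal S_k\subset\{F_n\}$; each such shape is $(K_k,\e_k)$-invariant and hence $(K,\e)$-invariant, giving~(1). For~(2), Lemma \ref{tiling-quasi2}(2), applied with $\{H_n\}$ in place of the ambient Følner sequence and $\e_k$ in place of $\e$, yields for all sufficiently large $n$ a subset $\tilde H_n\subset H_n$ with $|\tilde H_n|>(1-\e_k)|H_n|>(1-\e)|H_n|$ such that, for every $h\in\tilde H_n$ and every $1\le j\le l$,
\begin{equation*}
\sum_{g\in H_n}1_{C_jg^{-1}}(h)\;\le\;(1+\e_k)\,\rho_{\mathcal T}(F_{m_j},H_n)\,\frac{|H_n|}{|F_{m_j}|}\;\le\;(1+\e)\,\rho_{\mathcal T}(F_{m_j},H_n)\,\frac{|H_n|}{|F_{m_j}|}.
\end{equation*}
The left-hand side is precisely the multiplicity of the cover $\{C_jg^{-1}\}_{g\in H_n}$ at $h$, which establishes~(2).

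The substantive content is already packaged inside Lemma \ref{tiling-quasi2}; what remains here is purely bookkeeping. The one delicate point, and what I expect to be the main obstacle, is the order of quantifiers: $\{F_n\}$ must be constructed before $\{H_n\}$ enters the picture, which is precisely what forces the master-list approach above. Once the exhaustion $(K_k,\e_k)$ and the blockwise enumeration are fixed, (1) is immediate and (2) is a direct transcription of Lemma \ref{tiling-quasi2}(2) with $\{H_n\}$ in place of the ambient Følner sequence.
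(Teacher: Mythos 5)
Your proposal is correct and follows essentially the same route as the paper: both build the master Følner sequence $\{F_n\}$ once and for all from the shape sets of the Theorem~\ref{tiling2} tilings attached to an exhaustion $(K_k,\e_k)$, then for a given $(K,\e)$ invoke Lemma~\ref{tiling-quasi2} with a dominating pair and read off the covering bound from its conclusion. The only cosmetic difference is your explicit appeal to the remark that the shapes in Lemma~\ref{tiling-quasi2} do not depend on the input Følner sequence, which the paper uses implicitly.
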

\begin{proof}
  Let $\{\e_n\}$ be a sequence of real numbers decreasing to $0$
and let $\{K_n\}$ be a sequence of finite subsets of $G$ such that
\begin{enumerate}
  \item $\{e_G\}\subset K_1\subset K_2\subset\cdots$ and $\lim_{n\rightarrow\infty}K_n=G$;
  \item $K_n$ becomes more and more invariant as $n\rightarrow\infty$ (in fact $\{K_n\}$ is also a F{\o}lner sequence).
\end{enumerate}
Then we collect the shapes of tiling $\tilde{\mathcal T}(K_n,\e_n)$ associated with each pair $(K_n,\e_n)$ due to Theorem \ref{tiling2}
to form a sequence of finite subsets of $G$ and denote this sequence by $\{F_n\}$. Since the shapes become more and more invariant as $n\rightarrow\infty$,
$\{F_n\}$ is a F{\o}lner sequence of $G$.

For any $K\in F(G)$ and $\varepsilon>0$, let $K_n\supset K$ and $\e_n<\e$. We then take the finite tiling $\mathcal T'$ to be $\mathcal T'=\mathcal T(K_n,\e_n)$ as in Lemma \ref{tiling-quasi2}. Then every shape of $\mathcal T'$ is taken from the F{\o}lner sequence $\{F_n\}$ and $(K_n,\e_n)$-invariant (hence $(K,\e)$-invariant).

Moreover, by the same argument as in the proof of Lemma \ref{tiling-quasi2}, we can use the tiling $\mathcal T'$ to form the required tiling $\mathcal T$.  Let $F_{m_1},\ldots,F_{m_l}$ be the shapes of $\mathcal T$. Then for any sufficiently large $n\in\N$, for $1\le j\le l$, there exists $\tilde H_n\subset H_n$ with $|\tilde H_n|>(1-\e)|H_n|$
such that
    $$\bigg|\frac{1}{|H_n|}\sum_{g\in H_n}1_{C_jg^{-1}\cap H_n}(h)-\frac{\rho_{\mathcal T}(F_{m_j},H_n)}{|F_{m_j}|}\bigg|< \e\frac{\rho_{\mathcal T}(F_{m_j},H_n)}{|F_{m_j}|}, \text{ for any }h\in \tilde H_n.$$
Hence for any $h\in\tilde H_n$,
$$\frac{1}{|H_n|}\sum_{g\in H_n}1_{C_jg^{-1}\cap H_n}(h)<(1+\e)\frac{\rho_{\mathcal T}(F_{m_j},H_n)}{|F_{m_j}|}.$$
This shows that the set $\tilde{H}_n$ is covered by the family of sets $\{C_jg^{-1}\}_{g\in H_n}$ at most $(1+\e)\rho_{\mathcal T}(F_{m_j},H_n)\frac{|H_n|}{|F_{m_j}|}$-many times.
\end{proof}

\begin{remark}
  Since the construction of the F{\o}lner sequence $\{F_n\}$ is independent on the given tempered F{\o}lner sequence $\{H_n\}$,
  we can make $\{H_n\}$ to be a tempered subsequence of $\{F_n\}$. It would be more convenient if we can choose
  $\{H_n\}$ just to be the whole $\{F_n\}$, but we don't know whether we can make the whole F{\o}lner sequence $\{F_n\}$ tempered.
\end{remark}

\section{Mean dimension, mutual information and rate distortion function}
\subsection{Topological mean dimension and metric mean dimension}
Let $\X$ be a compact metrizable  space and $\alpha=\{U_1,U_2,\ldots, U_k\}$ be a finite open cover of $\X$. The {\it order}
of $\alpha$ is defined by
$${\rm ord}(\alpha)=\max_{x\in X}\sum_{i=1}^k1_{U_i}(x)-1.$$
Denote by $${\rm D}(\alpha)=\min_{\beta}{\rm ord}(\beta),$$
where $\beta$ is taken over all finite open covers of $\X$ with $\beta\succ\alpha$.

The {\it topological dimension} of $\X$ is then defined by
$$\dim \X=\sup_{\alpha} {\rm D}(\alpha),$$
where $\alpha$ runs over all finite open covers of $\X$.

Let $(\X,G)$ be a $G$-system, where $G$ is a countably infinite amenable group. For $F\in F(G)$ and a finite open cover $\alpha$ of $\X$, denote by $\alpha_F=\bigvee_{g\in F}g^{-1}\alpha$.
Then we can define
$${\rm D}(\alpha, G)=\lim_{n\rightarrow \infty}\frac{{\rm D}(\alpha_{F_n})}{|F_n|},$$
where $\{F_n\}$ is a F{\o}lner sequence of $G$. It is known that this limit exists and is independent on the choice of the F{\o}lner sequence.
The {\it mean topological dimension} ${\rm mdim}(\X,G)$ of $(\X,G)$ is defined by
$${\rm mdim}(\X,G)=\sup_{\alpha} {\rm D}(\alpha,G),$$
where $\alpha$ runs over all finite open covers of $\X$.

Let $(\mathcal{X},G)$ be a $G$-system with metric $d$. For $F\in F(G)$, define metrics $d_F$ and $\bar{d}_F$ on $\X$ by
$$d_F(x,y)=\max_{g\in F}d(gx,gy)$$
and
$$\bar{d}_F(x,y)=\frac{1}{|F|}\sum_{g\in F}d(gx,gy), x,y\in \X.$$
We note here that we also use $\bar d_F$ to denote the metric on $\X^F$ defined by
\begin{align}\label{metric}
  \bar{d}_F\big((x_g)_{g\in F},(y_g)_{g\in F}\big)=\frac{1}{|F|}\sum_{g\in F}d(x_g,y_g),
\end{align}
for $(x_g)_{g\in F},(y_g)_{g\in F}\in \X^F.$

For any $\e>0$, let $\#(\mathcal{X},d,\e)$ be the minimal cardinality of open cover $\mathcal{U}$ of
$\mathcal{X}$ with ${\rm diam }(\mathcal{U},d)<\e$. Then define
$$S(\X,G,d,\e)=\lim_{n\to\infty}\frac{1}{|F_n|}\log \#(\X,d_{F_n},\e).$$
This limit always exists and does not depend on the choice of the F{\o}lner sequence $\{F_n\}$.
Note that $h_{top}(\X,G)$, the topological entropy of the system $(\X,G)$, equals $\lim_{\e\to 0}S(\X,G,d,\e)$ for any metric $d$ which is compatible with the topology of $\X$.

The upper and lower {\it metric mean dimension} is then defined by
$${\rm \overline{mdim}_M}(\X,G,d)=\limsup_{\e\to 0}\frac{S(\X,G,d,\e)}{|\log \e|},$$
$${\rm \underline{mdim}_M}(\X,G,d)=\liminf_{\e\to 0}\frac{S(\X,G,d,\e)}{|\log \e|}.$$
When the limits agree, the common value is denoted by ${\rm mdim_M}(\X,G,d)$.

Replacing $d_F$ by $\bar{d}_F$ in the definition of $S(\X,G,d,\e)$, we can define for a F{\o}lner sequence $\{F_n\}$
$$\overline{S}(\X,\{F_n\},d,\e)=\limsup_{n\to\infty}\frac{1}{|F_n|}\log \#(\X,\bar{d}_{F_n},\e),$$
$$\underline{S}(\X,\{F_n\},d,\e)=\liminf_{n\to\infty}\frac{1}{|F_n|}\log \#(\X,\bar{d}_{F_n},\e).$$
It is easy to see that
$$\underline{S}(\X,\{F_n\},d,\e)\le\overline{S}(\X,\{F_n\},d,\e)\le S(\X,G,d,\e).$$

\begin{remark}
It was proved in Theorem 6.1 of \cite{LW} that if a function $f: F(G)\rightarrow \R$ satisfies
  \begin{enumerate}
    \item $f(F)\ge 0$ for any $F\in F(G)$ and $f(\emptyset)=0$,
    \item $f(E)\le f(F)$ for any $E,F\in F(G)$ with $E\subset F$,
    \item $f(Fg)=f(F)$ for any $F\in F(G)$ and $g\in G$,
    \item $f(E\cup F)\le f(E)+f(F)$ for any $E,F\in F(G)$ with $E\cap F=\emptyset$,
  \end{enumerate}
then $\frac{1}{|F|}f(F)$ converges to a limit as $F$ becomes more and more invariant. As a function of $F\in F(G)$, $\log \#(\X,\bar d_{F},\e)$ satisfies
{\rm (1),(3)} and {\rm (4)} but does not satisfy {\rm (2)}. Hence we can not derive $\underline{S}(\X,\{F_n\},d,\e)=\overline{S}(\X,\{F_n\},d,\e)$ from this theorem.
\end{remark}

\begin{proposition}\label{prop-S}
  Let $\{F_n\}$ and $\{H_n\}$ be any two F{\o}lner sequences and $\e>0$. Then
  $$\overline{S}(\X,\{H_n\},d,2\e)\le \underline{S}(\X,\{F_n\},d,\e).$$
\end{proposition}
\begin{proof}
  Let $\e>0$ be fixed. Passing to a subsequence of $\{F_n\}$ (we still denote it by $\{F_n\}$), assume that
  $$s:=\underline{S}(\X,\{F_n\},d,\e)= \lim_{n\to\infty}\frac{1}{|F_n|}\log \#(\X,\bar{d}_{F_n},\e).$$
  For any $\delta>0$, there exists $N\in\N$ such that for any $n>N$, $\#(\X,\bar{d}_{F_n},\e)\le e^{|F_n|(s+\delta)}$.
  For any $0<\eta<1/4$, by Theorem \ref{quasi-tiling}, there exist $N<n_1<n_2<\cdots<n_k$ such that
  $H_M$ can be $\eta$-quasi-tiled by $F_{n_1},F_{n_2},\cdots,F_{n_k}$, whenever $M$ is sufficiently large.
  Denote by $C_1,C_2,\cdots,C_k$ the tiling centers of this quasi-tiling.

  For each $1\le i\le k$, let $\mathcal{U}_i$ be an open cover of $\X$ with ${\rm diam }(\mathcal{U}_i,d_{F_{n_i}})<\e$ such that
  $\mathcal{U}_i$ has minimal cardinality $\#(\X,\bar{d}_{F_{n_i}},\e)$.

  Let $\mathcal {U}_0$ be a finite open cover of $\X$ with ${\rm diam }(\mathcal{U}_0,d_{e_G})<\e$ and assume $\#\mathcal{U}_0=L$. Now we construct an open cover $\mathcal U$ of $\X$ by
  $$\mathcal{U}=\bigg(\bigvee_{i=1}^k\bigvee_{c\in C_i}c^{-1}\mathcal {U}_i\bigg)\bigvee_{g\in H_M\setminus \cup_{i=1}^kF_{n_i}C_i}g^{-1}\mathcal{U}_0.$$

  Note that for any $g\in G$, $F\in F(G)$ and any finite open cover $\mathcal{V}$ of $\X$,
  $${\rm diam }(g^{-1}\mathcal{V},d_{Fg})={\rm diam }(\mathcal{V},d_{F}) \text{ and } \#g^{-1}\mathcal{V}=\#\mathcal{V}.$$

  Hence
  \begin{align*}
    {\rm diam }(\mathcal{U},d_{H_M})&\le \frac{1}{|H_M|}\bigg(\sum_{i=1}^k|C_i|\cdot|F_{n_i}|{\rm diam }(\mathcal{U}_i,d_{F_{n_i}})\\
    &\qquad\qquad\qquad\qquad\qquad\qquad+|H_M\setminus \cup_{i=1}^kF_{n_i}C_i|{\rm diam }(\mathcal{U}_0,d_{e_G})\bigg)\\
    &<\frac{1}{|H_M|}(\sum_{i=1}^k|C_i|\cdot|F_{n_i}|+|H_M\setminus \cup_{i=1}^kF_{n_i}C_i|)\e\\
    &<\big(\frac{1}{1-\eta}+\eta\big)\e \qquad \text{ (by \eqref{ineq-quasi-tiling})} \\
    &<2\e.
  \end{align*}
  Moreover,
  \begin{align*}
    \#\mathcal{U}&\le \bigg(\prod_{i=1}^k\prod_{c\in C_i}\#c^{-1}\mathcal {U}_i\bigg)\cdot\bigg(\prod_{g\in H_M\setminus \cup_{i=1}^kF_{n_i}C_i}\#g^{-1}\mathcal{U}_0\bigg)\\
    &= \bigg(\prod_{i=1}^k|C_i|e^{|F_{n_i}|(s+\delta)}\bigg)\cdot \bigg(L^{|H_M\setminus \cup_{i=1}^kF_{n_i}C_i|}\bigg)\\
    &\le \exp\bigg(|H_M|\big(\frac{1}{1-\eta}(s+\delta)+\eta\log L\bigg).
  \end{align*}
  Letting $\delta, \eta\rightarrow 0$, we have that
  \begin{align*}
    \overline{S}(\X,\{H_n\},d,2\e)\le \underline{S}(\X,\{F_n\},d,\e).
  \end{align*}
\end{proof}

As a corollary, we have
\begin{corollary}\label{coro-S}
  Let $\{F_n\}$ be a F{\o}lner sequence, then
  $$\limsup_{\e\to 0}\frac{\underline{S}(\X,\{F_n\},d,\e)}{|\log \e|}=\limsup_{\e\to 0}\frac{\overline{S}(\X,\{F_n\},d,\e)}{|\log \e|},$$
  $$\liminf_{\e\to 0}\frac{\underline{S}(\X,\{F_n\},d,\e)}{|\log \e|}=\liminf_{\e\to 0}\frac{\overline{S}(\X,\{F_n\},d,\e)}{|\log \e|}.$$
Moreover, the above limits do not depend on the choice of the F{\o}lner sequence $\{F_n\}$.
\end{corollary}

\begin{proposition}\label{mdim-equal}
  If $(\X,d)$ has tame growth of covering numbers, then for any F{\o}lner sequence $\{F_n\}$,
  $${\rm \overline{mdim}_M}(\X,G,d)=\limsup_{\e\to 0}\frac{\underline{S}(\X,\{F_n\},d,\e)}{|\log \e|}=\limsup_{\e\to 0}\frac{\overline{S}(\X,\{F_n\},d,\e)}{|\log \e|},$$
$${\rm \underline{mdim}_M}(\X,G,d)=\liminf_{\e\to 0}\frac{\underline{S}(\X,\{F_n\},d,\e)}{|\log \e|}=\liminf_{\e\to 0}\frac{\overline{S}(\X,\{F_n\},d,\e)}{|\log \e|}.$$
\end{proposition}
\begin{proof}
We only need to prove the case of ${\rm \overline{mdim}_M}(\X,G,d)$. The case of ${\rm \overline{mdim}_M}(\X,G,d)$ is similar.

Since $\underline{S}(\X,\{F_n\},d,\e)\le S(\X,G,d,\e)$, it obviously holds that
$${\rm \overline{mdim}_M}(\X,G,d)=\limsup_{\e\to 0}\frac{ S(\X,G,d,\e)}{|\log \e|}\ge\limsup_{\e\to 0}\frac{\overline{S}(\X,\{F_n\},d,\e)}{|\log \e|}.$$

Let $\mathcal W=\{W_1, \ldots, W_M\}$ be an open cover of $\X$ with ${\rm diam} (\mathcal W,d)<\e$ and $M=\# (\X,d,\e)$.
Respectively, for $F\in F(G)$, let $\mathcal U=\{U_1,\ldots,U_N\}$ be an open cover of $\X$ with ${\rm diam} (\mathcal U,\bar d_{F})<\e$ and $N=\#(\X,\bar d_F,\e)$.

Now for each $1\le i\le N$ choose a point $p_i\in U_i$. Then $\bar d_{F}(x,p_i)<\e$ for every $x\in U_i$. Hence for $L\ge 1$,
$$|\{g\in F: d(gx,gp_i)\ge L\e\}|<\frac{|F|}{L},$$
which follows that
$$U_i\subset \bigcup_{A\subset F \text{ with } |A|<\frac{|F|}{L}}B_{L\e}(p_i,d_{F\setminus A}).$$

For $A\subset F$, since $\bigvee_{g\in A}g^{-1}\mathcal{W}$ is a cover of $X$, it holds that
$$B_{L\e}(p_i,d_{F\setminus A})=\bigcup_{(m_g)_{g\in A}\in \{1,2,\cdots,M\}^A}\big (\cap_{g\in A}g^{-1}W_{m_g}\cap B_{L\e}(p_i,d_{F\setminus A})\big ).$$
Noticing that $${\rm diam }\big(\cap_{g\in A}g^{-1}W_{m_g}\cap B_{L\e}(p_i,d_{F\setminus A}),d_F\big)<2L\e,$$
we have for $A\subset F$ with $|A|<\frac{|F|}{L}$,
$$\#(B_{L\e}(p_i,d_{F\setminus A}),d_F,2L\e)\le M^{|A|}\le M^{\frac{|F|}{L}}.$$
Since there are $N$ choices of $U_i$ and no more than $2^{|F|}$ many choices of $A\subset F$, it holds that
$$\#(\X,d_F,2L\e)\le 2^{|F|}M^{\frac{|F|}{L}}N.$$
Thus
$$\frac{1}{|F|}\log \#(\X,d_F,2L\e)\le \log 2+\frac{1}{L}\log \#(\X,d,\e)+\frac{1}{|F|}\log \#(\X,\bar d_F,\e).$$

Now take $0<\delta<1$ and $L=(1/\e)^\delta$, we have
$$\frac{1}{|F|}\log \#(\X,d_F,2\e^{1-\delta})\le \log 2+\e^{\delta}\log \#(\X,d,\e)+\frac{1}{|F|}\log \#(\X,\bar d_F,\e).$$
Letting $F=F_n$ with $n\rightarrow \infty$ in any F{\o}lner sequence $\{F_n\}$,
$$S(\X,G,d,2\e^{1-\delta})\le \log 2+\e^{\delta}\log \#(\X,d,\e)+\overline{S}(\X,\{F_n\},d,\e).$$
Using the condition of the tame growth of covering numbers and then letting $\delta\rightarrow 0$, it follows that
$${\rm \overline{mdim}_M}(\X,G,d)\le\limsup_{\e\to 0}\frac{\overline{S}(\X,\{F_n\},d,\e)}{|\log \e|}.$$
Applying Corollary \ref{coro-S}, we finish the proof of the proposition.
\end{proof}

\subsection{Mutual information}\

Mutual information is a fundamental and important concept in information theory via entropy.
In this subsection we will introduce its definition and collect some of its basic properties from \cite{LT2}.

Let $(\Omega,\mathbb P)$ be a probability space, $\X,\Y$ be two measurable spaces and $X:\Omega\rightarrow \X$ and $Y:\Omega\rightarrow \Y$ be two measurable maps.
$I(X;Y)$, the {\it mutual information} of $X$ and $Y$ is defined by the following:
\begin{align}\label{def-mutual}
  I(X;Y):=\sup_{\mathcal{P},\mathcal{Q}}\sum_{P\in\mathcal{P},Q\in \mathcal {Q}}\p((X,Y)\in P\times Q)\log \frac{\p((X,Y)\in P\times Q)}{\p(X\in P)\p(Y\in Q)},
\end{align}
where $\mathcal{P}$ and $\mathcal{Q}$ run over all finite measurable partitions of $\X$ and $\Y$ respectively. With the convention we set that $0\log \frac{0}{a}=0$ for all $a\ge 0$.

It is easy to see that $I(X;Y)=I(Y;X)\ge 0$ for any measurable maps $X$ and $Y$.

The mutual information has the following properties.

\begin{proposition}[\cite{LT2}]\label{prop-mutual1}
  Let $\X,\Y,\z$ be measurable spaces , $X,Y,Z$ be measurable maps from $\Omega$ to $\X,\Y,\z$ respectively, and $f:\Y\rightarrow \z$ be a measurable map.
  \begin{enumerate}
    \item (Data-processing inequality).
  $$I(X;f(Y))\le I(X;Y).$$
    \item $I(X;Y)=H(X)-H(X|Y)=H(X)+H(Y)-H(X,Y)$.

     If in addition $\X,\Y$ and $\z$ are finite sets, then the following {\rm (3)-(6)} holds.

  \item Let $(X_n,Y_n):\Omega\rightarrow \X\times \Y$ be a sequence of measurable maps converging to $(X,Y)$ in law, then $I(X_n;Y_n)$ converges to $I(X;Y)$.

  \item (Fano's inequality). Let $P_e=\p(X\neq f(Y))$, then
  $$H(X|Y)\le H(P_e)+P_e\log |\X|.$$

  \item (Subadditivity). If $X$ and $Z$ are conditionally independent given $Y$, i.e. for every $y\in \Y$ with $\p(Y=y)\neq 0$ and for every $x\in \X$ and $y\in \Y$,
  $$\p(X=x,Z=z|Y=y)=\p(X=x|Y=y)\p(Z=z|Y=y),$$
  then $$I(Y;X,Z)\le I(Y;X)+I(Y;Z).$$

  \item (Superadditivity). If $X$ and $Z$ are independent, then
  $$I(Y;X,Z)\ge I(Y;X)+I(Y;Z).$$
  \end{enumerate}
\end{proposition}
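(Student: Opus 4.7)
The proposition collects six standard properties of mutual information; the strategy is to treat (1) by the universal property of the supremum over partitions, to establish (2) by recognizing that in the finite case the supremum is attained at the singleton partitions, and then to deduce (4), (5), (6) from (2) via elementary Shannon-entropy identities. Item (3) is a continuity statement that reduces to continuity of the mutual-information functional on the finite probability simplex.

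For (1), the plan is to pull back partitions: every finite measurable partition $\mathcal R$ of $\z$ induces $f^{-1}\mathcal R = \{f^{-1}(R) : R \in \mathcal R\}$, a finite measurable partition of $\Y$. The joint law of $(X, f(Y))$ with respect to $\mathcal P \times \mathcal R$ equals the joint law of $(X, Y)$ with respect to $\mathcal P \times f^{-1}\mathcal R$, so the summand defining $I(X; f(Y))$ at $(\mathcal P, \mathcal R)$ is matched by the summand defining $I(X; Y)$ at $(\mathcal P, f^{-1}\mathcal R)$; taking suprema gives the data-processing inequality. For (2), refinement monotonicity (itself a consequence of the log-sum inequality) shows the supremum is attained at the partitions of $\X$ and $\Y$ into singletons; expanding $\log\bigl(p(x,y)/(p(x)p(y))\bigr)$ into three logarithms then yields $I(X;Y) = H(X) + H(Y) - H(X,Y) = H(X) - H(X|Y)$. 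For (3), convergence in law on the finite alphabet $\X \times \Y$ is just convergence of the joint distribution in the probability simplex, and the map $(p(x,y))_{x,y} \mapsto I(X;Y)$ is continuous there because each summand $a\log(a/bc)$ extends continuously to the boundary by $0$ at $a = 0$ (and $p(x,y) > 0$ forces $p(x), p(y) > 0$, so no division-by-zero arises in the limit).

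For (4), I would introduce the error indicator $E = \mathbf{1}_{X \ne f(Y)}$, so that $\p(E = 1) = P_e$. The chain rule gives
$$H(X|Y) \le H(X, E | Y) = H(E | Y) + H(X | Y, E) \le H(P_e) + P_e \log |\X|,$$
using $H(E|Y) \le H(E) = H(P_e)$, the fact that $H(X|Y, E = 0) = 0$ (since $X = f(Y)$ is determined there), and the crude bound $H(X|Y, E = 1) \le \log |\X|$ weighted by $\p(E = 1) = P_e$.

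For (5) and (6), I would apply (2) to write $I(Y; X, Z) = H(X, Z) - H(X, Z | Y)$ and use the chain rule $H(X, Z | Y) = H(X|Y) + H(Z | X, Y)$. Conditional independence of $X, Z$ given $Y$ forces $H(Z | X, Y) = H(Z | Y)$, so the conditional joint entropy splits exactly as $H(X|Y) + H(Z|Y)$; combined with the general subadditivity $H(X, Z) \le H(X) + H(Z)$, this yields (5). For (6), independence of $X$ and $Z$ gives the equality $H(X, Z) = H(X) + H(Z)$, whereas the general conditional subadditivity $H(X, Z | Y) \le H(X|Y) + H(Z|Y)$ points the opposite way, and the two together produce superadditivity. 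The most delicate point in this whole program is the bookkeeping in (1), where one must verify that the pulled-back partition has measurable classes and that the matching of joint distributions is genuinely an equality; the remaining steps are classical Shannon-entropy manipulations and should present no real obstacle.
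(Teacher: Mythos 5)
Your proposal is correct, and since the paper itself gives no proof of this proposition (it merely collects these properties from Lindenstrauss--Tsukamoto \cite{LT2}), your arguments are exactly the standard ones that the cited source and classical information theory supply: pulling back partitions for the data-processing inequality, refinement monotonicity to reduce the supremum to singleton partitions in the finite case, continuity of entropy on the simplex for (3), the error-indicator argument for Fano, and the chain rule $H(X,Z|Y)=H(X|Y)+H(Z|X,Y)$ combined with (conditional) subadditivity of entropy for (5) and (6). The only cosmetic remark is that in (4) the step $H(X|Y)\le H(X,E|Y)$ is in fact an equality, since $E$ is a function of $(X,Y)$; this does not affect the argument.
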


Let $(X,d)$ be a compact metric space and $\e>0$. A subset $S\subset \X$ is said to be $\e$-separated if $d(x,y)\ge\e$ for any two distinct points $x,y\in S$.
The following lemma is Corollary 16 of \cite{LT2}, which is a corollary of Fano's inequality.

\begin{lemma}\label{lemma-Fano}
  Let $(X,d)$ be a compact metric space. Let $\e>0$ and $D>2$. Suppose $S\subset \X$ is a $2D\e$-separated set. Let $X$ and $Y$ be measurable maps from $\Omega$
  to $\X$ such that $X$ is uniformly distributed over $S$ and $\E\big(d(X,Y)\big)<\e$. Then
  \begin{align*}
    I(X;Y)\ge (1-\frac{1}{D})\log |S|-H(\frac{1}{D}).
  \end{align*}
\end{lemma}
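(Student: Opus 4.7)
The plan is to reduce the statement to the finite-alphabet Fano inequality by projecting $Y$ onto the finite set $S$. Define $f:\X\to S$ to be a measurable map sending each $y\in\X$ to one of its nearest points in $S$ (breaking ties using a fixed enumeration of $S$, which ensures measurability). The key geometric observation is that the $2D\e$-separation of $S$ forces $f(y)=s$ whenever $d(s,y)<D\e$: any other $s'\in S$ would satisfy $d(s',y)\ge d(s,s')-d(s,y)>2D\e-D\e=D\e>d(s,y)$.

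Applying this with $s=X(\omega)$ gives the inclusion $\{X\neq f(Y)\}\subset\{d(X,Y)\ge D\e\}$, so Markov's inequality yields
$$P_e:=\p(X\neq f(Y))\le \frac{\E(d(X,Y))}{D\e}<\frac{1}{D}.$$
Next, the data-processing inequality (Proposition \ref{prop-mutual1}(1)) gives $I(X;Y)\ge I(X;f(Y))$, which transports us into the finite-alphabet setting: both $X$ and $f(Y)$ take values in the finite set $S$, and $H(X)=\log|S|$ because $X$ is uniform on $S$. Fano's inequality (Proposition \ref{prop-mutual1}(4)) applied to $X$ and $f(Y)$, with the identity map $S\to S$ playing the role of the predictor, gives $H(X\mid f(Y))\le H(P_e)+P_e\log|S|$, so
$$I(X;f(Y))=H(X)-H(X\mid f(Y))\ge (1-P_e)\log|S|-H(P_e).$$

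To finish, I would observe that $\phi(p):=(1-p)\log|S|-H(p)$ is monotone decreasing on $[0,1/2]$; a quick derivative computation gives $\phi'(p)=-\log(|S|(1-p)/p)<0$ for $p\in(0,1/2)$ and $|S|\ge 1$. Combined with the bound $P_e<1/D<1/2$ from $D>2$, this yields
$$I(X;Y)\ge I(X;f(Y))\ge \phi(P_e)\ge \phi(1/D)=\bigl(1-\tfrac{1}{D}\bigr)\log|S|-H\bigl(\tfrac{1}{D}\bigr),$$
which is the desired inequality.

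The only substantive obstacle is that the Fano inequality stated in the excerpt is restricted to finite alphabets, whereas $Y$ a priori ranges over the possibly non-discrete compact metric space $\X$; the nearest-point projection together with the data-processing inequality sidesteps this cleanly, after which all remaining ingredients are Markov's inequality and elementary monotonicity of the binary-entropy expression.
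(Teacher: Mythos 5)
Your proof is correct, and it follows essentially the argument behind Corollary 2.5 of \cite{LT2}, which the paper cites without reproducing: project $Y$ to its nearest point in the (necessarily finite) separated set $S$, use Markov's inequality to bound $\p(X\neq f(Y))$ by $1/D$, and then apply the data-processing and Fano inequalities together with the monotonicity of $(1-p)\log|S|-H(p)$ on $[0,1/2]$. Nothing further is needed.
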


Let $\X,\Y$ be finite and let $X$ and $Y$ be measurable maps from $\Omega$ to $\X$ and $\Y$ respectively.
Let $$\mu(x)=\p(X=x), \nu(y|x)=\p(Y=y|X=x),$$
then $\mu(x)\nu(y|x)$ determines the distribution of $(X,Y)$ and hence the mutual information $I(X;Y)$. So sometimes we use $I(\mu,\nu)$ to instead $I(X;Y)$.
The following lemma (\cite[Lemma 2.10]{LT2}) shows the concavity and convexity of mutual information.

\begin{lemma}\label{prop-mutual2}\

\begin{enumerate}
  \item Suppose for each $x\in \X$ we are given a probability mass function $\nu(\cdot|x)$ on $\Y$. Let $\mu_1$ and $\mu_2$ be two probability mass function on $\X$. Then
  $$I((1-t)\mu_1+t\mu_2,\nu)\ge (1-t)I(\mu_1,\nu)+tI(\mu_2,\nu)\qquad (0\le t\le 1).$$

  \item Suppose for each $x\in \X$ we are given two probability mass functions $\nu_1(\cdot|x)$ and $\nu_2(\cdot|x)$ on $\Y$. Let $\mu$ be a probability mass function on $\X$. Then
  $$I(\mu,(1-t)\nu_1+t\nu_2)\le (1-t)I(\mu,\nu_1)+tI(\mu,\nu_2)\qquad (0\le t\le 1).$$
\end{enumerate}
\end{lemma}

\subsection{Rate distortion functions}\

Now we introduce rate distortion functions for dynamical systems.

Let $(\mathcal{X},G)$ be a $G$-system with metric $d$, where $G$ is a countably infinite amenable group. Denote by $M(\X,G)$ the collection of $G$-invariant probability measures of $\X$.

 Let $\e>0$ and $\mu\in M(\X,G)$. For $F\in F(G)$, let $X: \Omega\rightarrow \X$ and $Y_g: \Omega\rightarrow \X, g\in F$ be random variables defined on some probability space $(\Omega,\p)$. Assume the law of $X$ is given by $\mu$. We say $X$ and $Y=(Y_g)_{g\in F}$ are {\it $(F,\e)$-close (or $(F,\e)_{L^1}$-close)} if
  \begin{align}\label{condition-rate}
    \E\bigg(\frac{1}{|F|}\sum_{g\in F}d(gX,Y_g)\bigg)<\e.
  \end{align}
\eqref{condition-rate} is also called the ($L^1$) distortion condition.

There are also $L^\infty$ and $L^p~(p\ge 1)$ versions of distortion conditions and $L^\infty$ and $L^p~(p\ge 1)$ rate distortion functions. We will consider them in section 5.

Denote by $$R_{\mu}(\e,F)=\inf_{X,Y \text{ are }(F,\e)\text{-close}}I(X;Y).$$

\begin{remark}\label{remark-Yfinite}\
   Similar to Remark 14 of \cite{LT2}, in the definition of $L^1$ rate distortion functions,
   the random variable $Y$ can be assumed to take only finitely many values.
\end{remark}

Let $\{F_n\}$ be a F{\o}lner sequence in $G$.
The upper and lower {\it ($L^1$) rate distortion functions} are then defined
by $$\overline{R}_{\mu}(\{F_n\},\e)=\limsup_{n\rightarrow \infty}\frac{R_{\mu}(\e,F_n)}{|F_n|},$$
$$\underline{R}_{\mu}(\{F_n\},\e)=\liminf_{n\rightarrow \infty}\frac{R_{\mu}(\e,F_n)}{|F_n|}.$$
Using quasi-tiling technique, similar to the proof of Proposition \ref{prop-S}, we have

\begin{proposition}\label{prop-R}
  Let $\{F_n\}$ and $\{H_n\}$ be any two F{\o}lner sequences and $\e>0$. Then
  $$\overline{R}_{\mu}(\{H_n\},2\e)\le \underline{R}_{\mu}(\{F_n\},\e).$$
\end{proposition}
\begin{proof}
  Let $\e>0$ be fixed. Passing to a subsequence of $\{F_n\}$ (we still denote it by $\{F_n\}$), assume that
  $$r:=\underline{R}_{\mu}(\{F_n\},\e)= \lim_{n\rightarrow \infty}\frac{R_{\mu}(\e,F_n)}{|F_n|}.$$
  If $r=+\infty$, then there is nothing to prove. So we assume $r<+\infty$.
  For any $\delta>0$, there exists $N\in\N$ such that for any $n>N$, $R_{\mu}(\e,F_n)\le |F_n|(r+\delta)$.
  For any $0<\eta<\min\{1/4,\frac{\delta}{2r}, \frac{\e}{8{\rm diam}(\X,d)}\}$, by Theorem \ref{quasi-tiling}, there exist $N<n_1<n_2<\cdots<n_k$ such that
  $H_M$ can be $\eta$-quasi-tiled by $F_{n_1},F_{n_2},\cdots,F_{n_k}$, whenever $M$ is sufficiently large.
  Denote by $C_1,C_2,\cdots,C_k$ the tiling centers of this quasi-tiling. For each set $F_{n_j}c_j$ ($1\le j\le k, c_j\in C_j$),
  we take $T_{j,c_j}\subset F_{n_j}c_j$ with $|T_{j,c_j}|\ge (1-\eta)|F_{n_j}|$ form a disjoint collection associated to this quasi-tiling,
  i.e. $\bigsqcup_{1\le j\le k, c_j\in C_j}T_{j,c_j}\subset H_M$. Let $R:=H_M\setminus \bigsqcup_{1\le j\le k, c_j\in C_j}T_{j,c_j}$. Then
  \begin{align*}
    |R|&=|H_M|-\sum_{1\le j\le k, c_j\in C_j}|T_{j,c_j}| \le |H_M|-\sum_{1\le j\le k}(1-\eta)|F_{n_j}|\cdot |C_j|\nonumber \\
       &\le |H_M|-(1-\eta)^2|H_M| \qquad\qquad \text{(by \eqref{ineq-quasi-tiling})}\nonumber\\
       &< 2\eta|H_M|.
  \end{align*}

  For each $1\le j\le k$, let $X^{(j)}: \Omega\rightarrow \X$ and $Y_g^{(j)}: \Omega\rightarrow \X, g\in F_{n_j}$ be random variables defined on the probability space $(\Omega,\p)$ such that
  \begin{enumerate}
    \item ${\rm Law} \big(X^{(j)}\big)=\mu$ and $X^{(j)}$ and $Y^{(j)}=(Y_g^{(j)})_{g\in F_{n_j}}$ are $(F_{n_j},\e)$-close;
    \item $I(X^{(j)}; Y^{(j)})\le R_{\mu}(\e,F_{n_j})+\delta \le |F_{n_j}|(r+2\delta)$.
  \end{enumerate}
  By Remark \ref{remark-Yfinite}, we can make the random variables $Y^{(j)}$'s take finitely many values. Assume the distribution of $Y^{j}$ is supported
  on a finite subset $\Y^{(j)}\subset \X^{F_{n_j}}$, $1\le j\le k$.

For a subset $F\subset F_{n_j}$, denote by $Y^{(j)}_F=(Y_g^{(j)})_{g\in F}$ and $\Y^{(j)}_F={\rm Proj}_F\Y^{(j)}\subset \X^{F}$, the restriction of $Y^{(j)}$ and $\Y^{(j)}$ to $F$-coordinates, resectively.
Similarly, for $y=(y_g)_{g\in H_M}\in \X^{H_M}$ and $F\subset H_M$, denote by $y_F=(y_g)_{g\in F}$.

For each $n>0$, let $\mathcal P_n=\{P_1,\ldots,P_M\}$ be a measurable partition of $\X$ with ${\rm diam}(P_m,d)<\frac{1}{n}$ for each $1\le m\le M$. Moreover, we assume the sequence of partitions $\{\mathcal P_n\}_{n=1}^\infty$ is increased and $\vee_{n=1}^\infty\mathcal P_n$ generates the whole Borel-$\sigma$ algebra of $\X$. For each $\mathcal P_n$, assign each $P_m$ a point $p_m\in P_m$ and set $A=\{p_1,\ldots,p_M\}$. Denote by $\mathcal P_n(x)=p_m$ for $x\in P_m$. The random variable $\mathcal P_n(X^{(j)})$ takes values on $A$ and ${\rm Law} \big(\mathcal P_n(X^{(j)})\big)$ is determined by the push-forward measure $\mathcal P_{n\,*}\mu$ which is given by
\begin{align*}
  \mathcal P_{n\,*}\mu(p_m)=\p(X^{(j)}\in P_m)=\mu(P_m), \text{ for } 1\le m\le M.
\end{align*}
Let $\mathcal P_n^{F}(x)=\big(\mathcal P_n(gx)\big)_{g\in F}$ for $F\in F(G)$. Then the random variable $\mathcal P_n^F(X)$ takes values on $A^F\subset \X^F$ and the push-forward measure $\mathcal P_{n\,*}^F\mu$ is given by a similar may:
\begin{align*}
  \mathcal P_{n\,*}^F\mu(x)=\p\Big(gX^{(j)}\in \mathcal{P}(x_g): g\in F\Big)=\mu\Big(\cap_{g\in F}\mathcal{P}(x_g)\Big), \text{ for } x=(x_g)_{g\in F}\in A^F.
\end{align*}

\if and ${\rm Law} \big(\mathcal P_n^F(X)\big)$ is determined by the push-forward measure $\mathcal P_{n\,*}\mu$ which is given by
\begin{align*}
  \mathcal P_{n\,*}\mu(p_m)=\p(X\in P_m)=\mu(P_m), \text{ for } 1\le m\le M.
\end{align*}
\fi

Define for each pair $(j,c_j)$ the conditional probability mass function $\nu^{(j,c_j)}(y|x)$ by
\begin{align*}
  \nu^{(j,c_j)}(y|x)=\p\Big(Y_{T_{j,c_j}c_j^{-1}}^{(j)}=y|\mathcal P_n^{T_{j,c_j}c_j^{-1}}(X^{(j)})=x\Big), 
\end{align*}
where $y=(y_g)_{g\in T_{j,c_j}c_j^{-1}}\in \Y_{T_{j,c_j}c_j^{-1}}^{(j)}$ and $x=(x_g)_{g\in {T_{j,c_j}c_j^{-1}}}\in A^{T_{j,c_j}c_j^{-1}}$.
Then
\begin{align*}
  I( \mathcal P_{n\,*}^{T_{j,c_j}c_j^{-1}}\mu, \nu^{(j,c_j)})&=I\Big(\mathcal P_n^{T_{j,c_j}c_j^{-1}}(X^{(j)});Y_{T_{j,c_j}c_j^{-1}}^{(j)}\Big)\le I(X^{(j)}; Y^{(j)})\le |F_{n_j}|(r+2\delta),
\end{align*}
by the Data-processing inequality (Proposition \ref{prop-mutual1} (1)). 

Let $a\in \X$ be fixed. Now for $x=(x_g)_{g\in H_M}\in A^{H_M}$ we construct a probability mass function $\sigma_n(\cdot|x)$ on a finite subset $\Y_{H_M}$ of $\X^{H_M}$ as follows:
\begin{align*}
  \sigma_n(y|x):=\prod_{1\le j\le k, c_j\in C_j}\nu^{(j,c_j)}(y_{T_{j,c_j}}|x_{T_{j,c_j}})\cdot \prod_{g\in R}\delta_a(y_g).
\end{align*}
Here $y_{T_{j,c_j}}$ and $x_{T_{j,c_j}}$ should be understood as points in $\Y_{T_{j,c_j}c_j^{-1}}^{(j)}$ and $A^{T_{j,c_j}c_j^{-1}}$ by the $c_j^{-1}$-translation on coordinates and $\delta_a$ is the point probability mass function on $a$.

Hence we have
\begin{align*}
  I(\mathcal P_{n\,*}^{H_M}\mu,\sigma_n)&\le \sum_{1\le j\le k, c_j\in C_j}I\big(\mathcal P_{n\,*}^{T_{j,c_j}}\mu, \nu^{(j,c_j)}\big) \\
  &\qquad\text{(by Proposition \ref{prop-mutual1} (5), the subadditivity of mutual information)}\\
  &\le \sum_{j=1}^k|C_j|\cdot|F_{n_j}|\big(r+2\delta\big)\\
  &\le \frac{1}{1-\eta}|H_M|\big(r+2\delta\big)\qquad \text{(by \eqref{ineq-quasi-tiling})}\\
  &\le |H_M|\big(1+2\eta\big)\big(r+2\delta\big)\le |H_M|\big(r+4\delta\big).
\end{align*}

Denote by $\E_{\mathcal P_{n\,*}^{H_M}\mu,\sigma_n}$ the expectation with respect to the measure $\mathcal P_{n\,*}^{H_M}\mu(x)\sigma_n(y|x)$.
We have
\begin{align*}
  &\qquad\E_{\mathcal P_{n\,*}^{H_M}\mu,\sigma_n}\bigg( \bar d_{H_M}\big((gx)_{g\in H_M},y\big)\bigg)\\
  &\le \frac{1}{|H_M|}\sum_{1\le j\le k, c_j\in C_j}|T_{j,c_j}|\E_{\mathcal P_{n\,*}^{T_{j,c_j}c_j^{-1}}\mu, \nu^{(j,c_j)}}\bigg(\bar d_{T_{j,c_j}}\big((gx)_{g\in T_{j,c_j}},y_{T_{j,c_j}}\big)\bigg)\\
  &\qquad\qquad\qquad +\frac{1}{|H_M|}\sum_{g\in R}\E_{g_*\mathcal P_{n\,*}\mu,\delta_a}\bigg(d(gx,y_g)\bigg)\\
  &\le \frac{1}{|H_M|}\sum_{1\le j\le k, c_j\in C_j}|T_{j,c_j}|\E\bigg(\bar d_{T_{j,c_j}c_j^{-1}}\big(\mathcal P_n^{T_{j,c_j}c_j^{-1}}(X^{(j)}),Y^{(j)}_{T_{j,c_j}c_j^{-1}}\big)\bigg)+ |R|{\rm diam}(\X,d)\\
  &\le \frac{1}{|H_M|}\sum_{1\le j\le k, c_j\in C_j}|T_{j,c_j}|\Bigg(\E\bigg(\bar d_{T_{j,c_j}c_j^{-1}}\big((gX^{(j)})_{g\in T_{j,c_j}c_j^{-1}},Y^{(j)}_{T_{j,c_j}c_j^{-1}}\big)\bigg)\\
  &\qquad\qquad\qquad +\E\bigg(\bar d_{T_{j,c_j}c_j^{-1}}\big(\mathcal P_n^{T_{j,c_j}c_j^{-1}}(X^{(j)}),(gX^{(j)})_{g\in T_{j,c_j}c_j^{-1}}\big)\bigg)\Bigg)+ 2\eta{\rm diam}(\X,d)\\
  &\le \frac{1}{|H_M|}\bigg(\sum_{1\le j\le k}|C_j|\cdot|T_{j,c_j}|\Big(\e+\frac{1}{n}\Big)\bigg)+\frac{\e}{4}.
\end{align*}
Hence when when $n$ is sufficiently large, 
\begin{align}\label{ineq-E}
  \E_{\mathcal P_{n\,*}^{H_M}\mu,\sigma_n}\bigg( \bar d_{H_M}\big((gx)_{g\in H_M},y\big)\bigg)<\frac{3}{2}\e.
\end{align}

Let $\tau$ be a limit point of the sequence of probability measures $\{\mathcal P_{n\,*}^{H_M}\mu(x)\sigma_n(y|x)\}_{n=1}^\infty$ (passing to a subsequence, we still assume $\mathcal P^{H_M}_{n\,*}\mu(x)\sigma_n(y|x)\rightarrow \tau$). We note that $\tau$ is supported on $A^{H_M}\times \Y_{H_M}\subset \X^{H_M}\times\X^{H_M}$. Since $\mathcal P_n$ becomes finer and finer as $n\rightarrow\infty$, the probability measure $\mathcal P_{n\,*}\mu$ converges to $\mu$ in the weak* topology. Hence the projection of the first marginal of $\tau$ (say $\tilde \mu$) to any coordinate is $\mu$. Moreover, $\tilde \mu$ is supported
on $\{(gx)_{g\in H_M}: x\in {\rm supp}\,\mu\}$.

For any random variables $\tilde X=(\tilde X_g)_{g\in H_M}$ and $Y=(Y_g)_{g\in H_M}$ with ${\rm Law}\big(\tilde X,Y\big)$ obeying $\tau$, from the definition of the mutual information \eqref{def-mutual},
$I(\mathcal P_{n\,*}^{H_M}\mu,\sigma_n)\rightarrow I(\tilde X;Y)$. Let $X=g^{-1}\tilde X_g$ for some $g\in H_M$ (in fact, we can view $\tilde X$ as $(gX)_{g\in H_M}$). Then it holds that for sufficiently large $n$,
\begin{align*}
  I(X;Y)\le I(\tilde X;Y)\le I(\mathcal P_{n\,*}\mu,\sigma_n)+\delta\le |H_M|\big(r+5\delta\big).
\end{align*}

 Then by \eqref{ineq-E}, for sufficiently large $n$,
  \begin{align*}
    \E\bigg(\frac{1}{|H_M|}\sum_{g\in H_M}d(gX,Y_g)\bigg)&=\int_{\X^{H_M}\times \X^{H_M}}\frac{1}{|H_M|}\sum_{g\in H_M}d(gx,y_g)d\tau\big((gx)_{g\in H_M},(y_g)_{g\in H_M}\big)\\
    &\le
    \E_{\mathcal P_{n\,*}^{H_M}\mu,\sigma_n}\bigg( \bar d_{H_M}\big((gx)_{g\in H_M},y\big)\bigg)+\frac{\e}{2}< 2\e.
  \end{align*}
  
  Letting $\delta\rightarrow 0$, we have that
  \begin{align*}
    \overline{S}(\X,\{H_n\},d,2\e)\le \underline{S}(\X,\{F_n\},d,\e).
  \end{align*}
\end{proof}

\section{$L^1$ variational principle}
In this section, we will prove the following $L^1$ variational principle between metric mean dimension and the $L^1$ rate distortion function.

\begin{theorem}\label{theorem-L1}
  If $(\X,d)$ has tame growth of covering numbers, then
\begin{align*}
  {\rm \overline{mdim}_M}(\X,G,d)&=\limsup_{\e\to 0}\frac{\sup_{\mu\in M(\X,G)}\overline{R}_{\mu}(\{F_n\},\e)}{|\log \e|}\\
  &=\limsup_{\e\to 0}\frac{\sup_{\mu\in M(\X,G)}\underline{R}_{\mu}(\{F_n\},\e)}{|\log \e|}
\end{align*}
and
\begin{align*}
  {\rm \underline{mdim}_M}(\X,G,d)&=\liminf_{\e\to 0}\frac{\sup_{\mu\in M(\X,G)}\overline{R}_{\mu}(\{F_n\},\e)}{|\log \e|}\\
  &=\liminf_{\e\to 0}\frac{\sup_{\mu\in M(\X,G)}\underline{R}_{\mu}(\{F_n\},\e)}{|\log \e|},
\end{align*}
where $\{F_n\}$ is any F{\o}lner sequence.
\end{theorem}
\subsection{The lower bound}
\begin{lemma}\label{lemma-lower}
  For $\e>0$, $\mu\in M(\X,G)$ and a F{\o}lner sequence $\{F_n\}$, we have
  $$\overline{R}_{\mu}(\{F_n\},\e)\le \overline{S}(\X,\{F_n\},d,\e),$$
  $$\underline{R}_{\mu}(\{F_n\},\e)\le \underline{S}(\X,\{F_n\},d,\e).$$
\end{lemma}
\begin{proof}
For $n>0$, denote by $M=\#(\X,\bar d_{F_n},\e)$ and
let $\{U_1,\ldots,U_M\}$ be an open cover of $\X$ with ${\rm diam}(U_m,\bar d_{F_n})<\e$ for each $1\le m\le M$.
Choose a point $p_m\in U_m$ for each $m$. For any $x\in \X$, let $m$ be the smallest number satisfying $x\in U_m$. Then by setting $f(x)=p_m$ we can define a map $f:\X\rightarrow \{p_1,\ldots,p_M\}$ and hence $\bar d_{F_n}(x,f(x))<\e$. Let $X$ be a random variable obeying $\mu$ and let $Y=(gf(X))_{g\in F_n}$.
Then
$$\E\bigg(\frac{1}{|F_n|}\sum_{g\in F_n}d(gX,gf(X))\bigg)=\E~\bar d_{F_n}(X,f(X))<\e.$$
Hence $$I(X;Y)\le H(Y)\le \log M=\log\#(\X,\bar d_{F_n},\e).$$
Dividing by $|F_n|$ and taking limsup and liminf for $n\rightarrow\infty$, we complete the proof of the lemma.
\end{proof}

Since $\overline {S}(\X,\{F_n\},d,\e)\le S(\X,G,d,\e)$, together with Proposition \ref{prop-R}, we have
\begin{proposition}\label{prop-lower}
\begin{align*}
  {\rm \overline{mdim}_M}(\X,G,d)&\ge\limsup_{\e\to 0}\frac{\sup_{\mu\in M(\X,G)}\overline{R}_{\mu}(\{F_n\},\e)}{|\log \e|}\\
  &=\limsup_{\e\to 0}\frac{\sup_{\mu\in M(\X,G)}\underline{R}_{\mu}(\{F_n\},\e)}{|\log \e|}
\end{align*}
and
\begin{align*}
  {\rm \underline{mdim}_M}(\X,G,d)&\ge\liminf_{\e\to 0}\frac{\sup_{\mu\in M(\X,G)}\overline{R}_{\mu}(\{F_n\},\e)}{|\log \e|}\\
  &=\liminf_{\e\to 0}\frac{\sup_{\mu\in M(\X,G)}\overline{R}_{\mu}(\{F_n\},\e)}{|\log \e|}.
\end{align*}

\end{proposition}

\subsection{The upper bound} \

\begin{proposition}\label{prop-upper}
  For any $\e>0,D>2$ and any F{\o}lner sequence $\{F_n\}$, there exists $\mu\in M(\X,G)$ such that
  \begin{align}\label{mu}
    \underline{R}_{\mu}(\{F_n\},\e/2)\ge (1-\frac{1}{D})\overline S\big(\X,\{F_n\},d,(32D+8)\e\big).
  \end{align}
\end{proposition}
\begin{proof}
Let $\{F_n\}$ be the F{\o}lner sequence constructed as in Lemma \ref{lemma-folner}.

For each $F_n$ we choose $S_n$ to be a maximal $(8D+2)\e$-separated set of $\X$ with respect to the metric $\bar d_{F_n}$. Then
\begin{align}\label{ineq-4-Sn}
  |S_n|\ge \#\big (\X,\bar d_{F_n},(16D+4)\e \big ).
\end{align}

Define $$\nu_n=\frac{1}{|S_n|}\sum_{x\in S_n}\delta_x$$
and
$$\mu_n=\frac{1}{|F_n|}\sum_{g\in F_n}\nu_n\circ g^{-1}.$$

Choose a convergence subsequence $\{\mu_{n_i}\}$ in the weak$^*$ topology and assume it converges to $\mu$. Then $\mu\in M(\X,G)$.
Meanwhile, we can make $\{F_{n_i}\}$ be a tempered subsequence of $\{F_n\}$. We will show that $\mu$ satisfies the inequality \eqref{mu}.

Let $\mathcal P=\{P_1,\ldots,P_M\}$ be a measurable partition of $\X$ with ${\rm diam}(P_m,d)<\e$ and $\mu(\partial P_m)=0$ for each $1\le m\le M$.

Assign each $P_m$ a point $p_m\in P_m$ and set $A=\{p_1,\ldots,p_M\}$. Denote by $\mathcal P(x)=p_m$ for $x\in P_m$. Then
\begin{align}\label{ineq-4-Px}
  d\big (x,\mathcal P(x)\big )<\e, \text{ for any }x\in \X.
\end{align}
Let $\mathcal P^{F}(x)=\big(\mathcal P(gx)\big)_{g\in F}$ for $F\in F(G)$. Recall that we also use $\bar d_F$ to denote the metric on $\X^F$ for $F\in F(G)$ (see \eqref{metric} for the definition). By \eqref{ineq-4-Px}, we have $\bar d_{F_n}\big((gx)_{g\in F_n}, \mathcal P^{F_n}(x)\big)<\e$ for any $x\in\X$.
For any two distinct points $x,y\in S_n$, we have
\begin{align*}
  \bar d_{F_n}\big(\mathcal P^{F_n}(x),\mathcal P^{F_n}(y)\big)
  &\ge \bar d_{F_n}(x,y)-\bar d_{F_n}\big((gx)_{g\in F_n}, \mathcal P^{F_n}(x)\big)-\bar d_{F_n}\big((gy)_{g\in F_n}, \mathcal P^{F_n}(y)\big)\\
  &>(8D+2)\e-2\e=8D\e.
\end{align*}
Hence the set
$$\mathcal P^{F_n}(S_n)=\{\mathcal P^{F_n}(x)| x\in S_n\}$$
is an $8D\e$-separated set of $\X^{F_n}$ with respect to the metric $\bar d_{F_n}$.
Moreover, since $\nu_{F_n}$ is the uniform distribution over $S_n$, the push-forward measure $\mathcal P^{F_n}_* \nu_n$ is also the uniform distribution measure over
$\mathcal P^{F_n}(S_n)$. Note that $|\mathcal P^{F_n}(S_n)|=|S_n|$.

Let $X: \Omega\rightarrow \X$ be a random variable defined on some probability space $(\Omega,\p)$ such that the law of $X$ is given by $\mu$.
For $F\in F(G)$, let $Y_{F,g}: \Omega\rightarrow \X$ ($g\in F$) be random variables defined on the same probability space $(\Omega,\p)$ such that
$Y_F:=(Y_{F,g})_{g\in F}$  and $X$ are $(F,\e)$-close, i.e.
\begin{align}\label{ineq-4-XY}
  \E\bigg(\frac{1}{|F|}\sum_{g\in F}d(gX,Y_{F,g})\bigg)<\e.
\end{align}
We can assume the distribution of $Y_F$ is supported on a finite set $\Y_F\subset \X^F$ (by \cite[Remark 2.3]{LT2}).
By (1) of Proposition \ref{prop-mutual1}, the Data-processing inequality,
$$I(X;Y_F)\ge I\big(\mathcal P^F(X);Y_F\big).$$
Let $\tau_F
$ be the law of $\big(\mathcal P^F(X),Y_F\big)$, which is a probability measure on $A^F\times \Y_F$.
It follows that
\begin{align}\label{ineq-4-tau}
  \int_{A^F\times \Y_F}\bar d_F(x,y)d\tau_F(x,y)&=\E\bigg(\frac{1}{|F|}\sum_{g\in F}d\big(\mathcal P(gX),Y_{F,g}\big)\bigg)\nonumber\\
  &\le \E\bigg(\frac{1}{|F|}\sum_{g\in F}d(\mathcal P(gX),gX)\bigg)+\E\bigg(\frac{1}{|F|}\sum_{g\in F}d\big(gX,Y_{F,g}\big)\bigg)\nonumber\\
  &<2\e\text{ (by \eqref{ineq-4-Px} and \eqref{ineq-4-XY}) }.
\end{align}

For each $n\ge 1$, we consider the couplings of $(\mathcal P^F_*\mu_n,\mathcal P^F_*\mu)$ (i.e. a probability measure on $A^F\times A^F$ whose marginals are
  $\mathcal P^F_*\mu_n$ and $\mathcal P^F_*\mu$ respectively). We choose a probability measure $\pi_{F,n}$ that minimizes the following integral
  $$\int_{A^F\times A^F}\bar d_F(x,y)d\pi(x,y)$$
  among all such couplings $\pi$. Similar to Claim 30 of \cite{LT2}, the sequence $\pi_{F,n_i}$ converges to $(\mathcal P^F\times\mathcal P^F)_*\mu$ in the weak$^*$ topology.

Since both the second marginal of $\pi_{F,n}$ and the first marginal of $\tau_F$ are equal to the measure $\mathcal P^F_*\mu$,
we can compose them to produce a coupling $\tau_{F,n}$ of $\big(\mathcal P^F_*\mu_n,{\rm Law}(Y)\big)$ by the following way:
$$\tau_{F,n}(x,y)=\sum_{x'\in A^F}\pi_{F,n}(x,x')\p(Y=y|\mathcal P^F(X)=x'), \qquad (x\in A^F,y\in\Y_F).$$
We note here that the sequence $\tau_{F,n_i}$ converges to $\tau_F$ in the weak$^*$ topology and hence by \eqref{ineq-4-tau},
\begin{align}\label{ineq-4-tau2}
  \E_{\tau_{F,n_i}}\big(\bar d_F(x,y)\big):=\int_{A^F\times \Y_F}\bar d_F(x,y)d\tau_{F,n_i}(x,y)<2\e
\end{align}
for all sufficiently large $n_i$.

For $x\in \bigcup_{g\in F_n}\mathcal P^F(gS_n)$ and $y\in \X^F$, define a conditional probability mass function $\tau_{F,n}(y|x)$ by
$$\tau_{F,n}(y|x)=\frac{\tau_{F,n}(x,y)}{\mathcal P^F_*\mu_n(x)}.$$

Recall that our F{\o}lner sequence $\{F_n\}$ is constructed as in Lemma \ref{lemma-folner}.
Then for any $K\in F(G)$ with $e_G\in K$ and $0<\e_1<\min\{\frac{1}{2},\frac{\e}{{\rm diam}(\X,d)}\}$,
by Lemma \ref{lemma-folner} (here we choose $\{H_n\}$ to be the tempered F{\o}lner sequence $\{F_{n_i}\}$), there exists $\mathcal T$, a finite tiling of $G$,
satisfying the following two conditions:
 \begin{enumerate}
  \item[(C1)] $\mathcal{T}$ has shapes $\{F_{m_1},\ldots,F_{m_l}\}$ consisted with F{\o}lner sets in $\{F_n\}$ and each $F_{m_j}$
  is $(K,\e_1)$-invariant;
  \item[(C2)] for sufficiently large $i$, for each $1\le j\le l$, the family of sets $\{C_jg^{-1}\}_{g\in F_{n_i}}$ covers a subset $\tilde{F}_{n_i}\subset F_{n_i}$ with
  $|\tilde{F}_{n_i}|>(1-\e_1)|F_{n_i}|$ no more than $(1+\e_1)\rho_{\mathcal T}(F_{m_j},F_{n_i})\frac{|F_{n_i}|}{|F_{m_j}|}$-many times, where $C_j$ is the center of the shape $F_{m_j}$.
\end{enumerate}
Note that $\mathcal T=\{F_{m_j}c: c\in C_j,1\le j\le l\}$ and
$$G=\bigsqcup_{j=1}^l\bigsqcup_{c\in C_j}F_{m_j}c.$$
Here `` $\bigsqcup$ " stands for the disjoint union.
For $g\in F_{n_i}$, denote by $$R_g=F_{n_i}\setminus \bigg(\bigsqcup_{j=1}^l\bigsqcup_{c\in C_j,F_{m_j}c\subset F_{n_i}g,cg^{-1}\in \tilde{F}_{n_i}}F_{m_j}cg^{-1}\bigg),$$
i.e. $R_g$ is the remaining part by removing from $F_{n_i}$ the elements of $\mathcal Tg^{-1}$ that entirely contained in $F_{n_i}$.
Obviously, $R_g\subset B(F_{n_i}, \cup_{j=1}^lF_{m_j})$, and thus when $n_i$ is large enough, $|R_g|< \e |F_{n_i}|$.

Fix a point $a\in \X$. For $x=(x_g)_{g\in F_{n_i}}\in \mathcal P^{F_{n_i}}(S_{n_i})$ and $g\in F_{n_i}$, we define probability mass functions $\sigma_{F_{n_i},g}(\cdot|x)$ on $\X^{F_{n_i}}$ as the following:
for $y=(y_g)_{g\in F_{n_i}}\in \X^{F_{n_i}}$,
\begin{align}\label{def-measure1}
  \sigma_{F_{n_i},g}(y|x)=\prod_{j=1}^l\prod_{c\in C_j,F_{m_j}c\subset F_{n_i}g,cg^{-1}\in \tilde{F}_{n_i}}\tau_{F_{m_j},n_i}(y_{F_{m_j}cg^{-1}}|x_{F_{m_j}cg^{-1}})\cdot \prod_{h\in R_g}\delta_a(y_h).
\end{align}
Here we note that $$y_{F_{m_j}cg^{-1}}=(y_h)_{h\in F_{m_j}cg^{-1}}\in \X^{F_{m_j}cg^{-1}}$$ and
$$x_{F_{m_j}cg^{-1}}=(x_h)_{h\in F_{m_j}cg^{-1}}\in \mathcal P^{F_{m_j}cg^{-1}}(S_{n_i})\subset \X^{F_{m_j}cg^{-1}}.$$
Then we set
\begin{align}\label{def-measure2}
  \sigma_{F_{n_i}}(y|x)=\frac{1}{|F_{n_i}|}\sum_{g\in F_{n_i}}\sigma_{F_{n_i},g}(y|x).
\end{align}

\begin{claim}\label{claim-I}
  For sufficiently large $n_i$, there exists some $1\le j\le l$ such that
\begin{align*}
  (1-\e_1)\frac{1}{|F_{n_i}|}I(\mathcal P^{F_{n_i}}_*\nu_n,\sigma_{F_{n_i}})\le \frac{1}{|F_{m_j}|}I\big(\mathcal P^{F_{m_j}}_*(\mu_{n_i}),\tau_{F_{m_j},n_i}\big).
\end{align*}
\end{claim}
\begin{proof}[{\bf Proof of Claim \ref{claim-I}}]
By (2) of Proposition \ref{prop-mutual2}, the convexity of mutual information,
\begin{align}\label{ineq-4-I1}
  I(\mathcal P^{F_{n_i}}_*\nu_{n_i},\sigma_{F_{n_i}})\le \frac{1}{|F_{n_i}|}\sum_{g\in F_{n_i}}I(\mathcal P^{F_{n_i}}_*\nu_{n_i},\sigma_{F_{n_i},g}).
\end{align}
By (5) of Proposition \ref{prop-mutual1}, the subadditivity of mutual information, together with \eqref{def-measure1}, we have
\begin{align}\label{ineq-4-I2}
  I(\mathcal P^{F_{n_i}}_*\nu_{n_i},\sigma_{F_{n_i},g})\le \sum_{j=1}^l\sum_{c\in C_j,F_{m_j}c\subset F_{n_i}g, cg^{-1}\in\tilde F_{n_i}}I\bigg(\mathcal P^{F_{m_j}}_*\big((cg^{-1})_*\nu_{n_i}\big),\tau_{F_{m_j},n_i}\bigg).
\end{align}

Joint \eqref{ineq-4-I1} and \eqref{ineq-4-I2} together,
\begin{align*}
  I(\mathcal P^{F_{n_i}}_*\nu_{n_i},\sigma_{F_{n_i}})&\le \frac{1}{|F_{n_i}|}\sum_{g\in F_{n_i}}\sum_{j=1}^l\sum_{c\in C_j,F_{m_j}c\subset F_{n_i}g,cg^{-1}\in\tilde F_{n_i}}
  I\bigg(\mathcal P^{F_{m_j}}_*\big((cg^{-1})_*\nu_{n_i}\big),\tau_{F_{m_j},n_i}\bigg)\\
  &=\frac{1}{|F_{n_i}|}\sum_{j=1}^l\sum_{g\in F_{n_i}}\sum_{c\in C_j,F_{m_j}c\subset F_{n_i}g,cg^{-1}\in\tilde F_{n_i}}
  I\bigg(\mathcal P^{F_{m_j}}_*\big((cg^{-1})_*\nu_{n_i}\big),\tau_{F_{m_j},n_i}\bigg).
\end{align*}
For convenience, denote by $t_j=\rho_{\mathcal T}(F_{m_j},F_{n_i})$, then we have
\begin{align*}
  I(\mathcal P^{F_{n_i}}_*\nu_n,\sigma_{F_{n_i}})
  &\le \frac{1}{|F_{n_i}|}\sum_{j=1}^l\sum_{h\in \tilde{F}_{n_i}}(1+\e_1)t_j\frac{|F_{n_i}|}{|F_{m_j}|}
  I\big(\mathcal P^{F_{m_j}}_*(h_*\nu_{n_i}),\tau_{F_{m_j},n_i}\big)\\
  &\qquad \text{ (by condition (C2))}\\
  &\le \sum_{j=1}^l (1+\e_1)t_j\frac{|F_{n_i}|}{|F_{m_j}|} \frac{1}{|F_{n_i}|}\sum_{h\in F_{n_i}}
  I\big(\mathcal P^{F_{m_j}}_*(h_*\nu_{n_i}),\tau_{F_{m_j},n_i}\big)\\
  &\le \sum_{j=1}^l(1+\e_1)t_j\frac{|F_{n_i}|}{|F_{m_j}|}I\big(\mathcal P^{F_{m_j}}_*(\frac{1}{|F_{n_i}|}\sum_{h\in F_{n_i}}h_*\nu_{n_i}),\tau_{F_{m_j},n_i}\big)\\
  &\qquad \text{(by (1) of Proposition \ref{prop-mutual2}, the concavity of mutual information)}\\
  &=(1+\e_1)|F_{n_i}|\sum_{j=1}^lt_j\frac{1}{|F_{m_j}|}I\big(\mathcal P^{F_{m_j}}_*(\mu_{n_i}),\tau_{F_{m_j},n_i}\big),
\end{align*}
i.e.
\begin{align*}
  (1-\e_1)\frac{1}{|F_{n_i}|}I(\mathcal P^{F_{n_i}}_*\nu_n,\sigma_{F_{n_i}})\le\sum_{j=1}^lt_j\frac{1}{|F_{m_j}|}I\big(\mathcal P^{F_{m_j}}_*(\mu_{n_i}),\tau_{F_{m_j},n_i}\big).
\end{align*}

Noticing that $\sum_{j=1}^lt_j\le 1$, there must exists some $1\le j\le l$ such that
\begin{align*}
  (1-\e_1)\frac{1}{|F_{n_i}|}I(\mathcal P^{F_{n_i}}_*\nu_n,\sigma_{F_{n_i}})\le \frac{1}{|F_{m_j}|}I(\mathcal P^{F_{m_j}}_*(\mu_n),\tau_{F_{m_j},n}).
\end{align*}
This finishes the proof of Claim \ref{claim-I}.
\end{proof}
Denote by $\E_{\mathcal P^{F_{n_i}}_*\nu_{n_i},\sigma_{F_{n_i}}}\big(\bar d_{F_{n_i}}(x,y)\big)$ the expected value of $\bar d_{F_{n_i}}(x,y)$ ($x,y\in \X^{F_{n_i}}$) with respect to the probability measure
$\mathcal P^{F_{n_i}}_*\nu_{n_i}(x)\sigma_{F_{n_i}}(y|x)$.

\begin{claim}\label{claim-E}
For sufficiently large $n_i$,
\begin{align*}
  \E_{\mathcal P^{F_{n_i}}_*\nu_{n_i},\sigma_{F_{n_i}}}\big(\bar d_{F_{n_i}}(x,y)\big)< 4\e.
\end{align*}
and
\begin{align*}
  I(\mathcal P^{F_{n_i}}_*\nu_{n_i},\sigma_{F_{n_i}})\ge (1-\frac{1}{D})\log |S_{n_i}|-H(\frac{1}{D}).
\end{align*}
\end{claim}
\begin{proof}[{\bf Proof of Claim \ref{claim-E}}]
  By \eqref{def-measure1} and \eqref{def-measure2}, the definitions of probability mass functions $\sigma_{F_{n_i},g}(\cdot|x)$ ($g\in F_{n_i}$) and $\sigma_{F_{n_i}}(\cdot|x)$, we have
\begin{align*}
  \E_{\mathcal P^{F_{n_i}}_*\nu_{n_i},\sigma_{F_{n_i}}}\big(\bar d_{F_{n_i}}(x,y)\big)
  =\frac{1}{|F_{n_i}|}\sum_{g\in F_{n_i}}\E_{\mathcal P^{F_{n_i}}_*\nu_{n_i},\sigma_{F_{n_i},g}}\big(\bar d_{F_{n_i}}(x,y)\big)
\end{align*}
and
\begin{align*}
  &|F_{n_i}|\E_{\mathcal P^{F_{n_i}}_*\nu_{n_i},\sigma_{F_{n_i},g}}\big(\bar d_{F_{n_i}}(x,y)\big)\\
  \le & \sum_{j=1}^l\sum_{c\in C_j,F_{m_j}c\subset F_{n_i}g,cg^{-1}\in\tilde F_{n_i}}|F_{m_j}|\E_{\mathcal P^{F_{m_j}}_*\big((cg^{-1})_*\nu_{n_i}\big),\tau_{F_{m_j},n_i}}
  \big(\bar d_{F_{m_j}}(x',y')\big)\\
  &\qquad +|R_g|{\rm diam}(\X,d),
\end{align*}
where $x,y$ are random points in $\X^{F_{n_i}}$ and $x',y'$ appeared in $\bar d_{F_{m_j}}(x',y')$ are in $\X^{F_{m_j}}$.

When $F_{n_i}$ is sufficiently invariant, $|R_g|<\e_1|F_{n_i}|$. Hence
\begin{align*}
  &\E_{\mathcal P^{F_{n_i}}_*\nu_{n_i},\sigma_{F_{n_i}}}\big(\bar d_{F_{n_i}}(x,y)\big)\\
  \le& \frac{1}{|F_{n_i}|}\sum_{j=1}^l\sum_{g\in F_{n_i}}\sum_{c\in C_j,F_{m_j}c\subset F_{n_i}g,cg^{-1}\in\tilde F_{n_i}}
  \frac{|F_{m_j}|}{|F_{n_i}|}\E_{\mathcal P^{F_{m_j}}_*((cg^{-1})_*\nu_{n_i}),\tau_{F_{m_j},n_i}}\big(\bar d_{F_{m_j}}(x',y')\big)\\&\qquad +\e_1{\rm diam}(\X,d)\\
  \le& \frac{1}{|F_{n_i}|}\sum_{j=1}^l\sum_{h\in \tilde F_{n_i}}(1+\e_1)t_j
  \E_{\mathcal P^{F_{m_j}}_*(h_*\nu_{n_i}),\tau_{F_{m_j},n_i}}\big(\bar d_{F_{m_j}}(x',y')\big)+\e_1{\rm diam}(\X,d)\\
  &\qquad \text{ (by condition (C2) and recall here } t_j=\rho_{\mathcal T}(F_{m_j},F_{n_i})\text{)}\\
  \le& \sum_{j=1}^l(1+\e_1)t_j\E_{\mathcal P^{F_{m_j}}_*(\frac{1}{|F_{n_i}|}\sum_{h\in F_{n_i}}h_*\nu_{n_i}),\tau_{F_{m_j},n_i}}\big(\bar d_{F_{m_j}}(x',y')\big)+\e_1{\rm diam}(\X,d)\\
  =&\sum_{j=1}^l(1+\e_1)t_j\E_{\mathcal P^{F_{m_j}}_*\mu_{n_i},\tau_{F_{m_j},n_i}}\big(\bar d_{F_{m_j}}(x',y')\big)+\e_1{\rm diam}(\X,d)\\
  =&\sum_{j=1}^l(1+\e_1)t_j\int_{A^{F_{m_j}}\times \Y_{F_{m_j}}}\bar d_{F_{m_j}}(x',y')d\tau_{F_{m_j},n_i}(x,y)+\e_1{\rm diam}(\X,d).
\end{align*}

Recall that $0<\e_1<\min\{\frac{1}{2},\frac{\e}{{\rm diam}(\X,d)}\}$ and $\sum_{j=1}^lt_j\le 1$. By \eqref{ineq-4-tau2}, for sufficiently large $n_i$, we have
\begin{align*}
  \E_{\mathcal P^{F_{n_i}}_*\nu_{n_i},\sigma_{F_{n_i}}}\big(\bar d_{F_{n_i}}(x,y)\big)< (1+\frac{1}{2})2\e+\e=4\e.
\end{align*}

Since $\mathcal P^{F_{n_i}}_*\nu_{n_i}$ is uniformly distributed over $\mathcal P^{F_{n_i}}(S_{n_i})$ and $\mathcal P^{F_{n_i}}(S_{n_i})$ is a $(8D\e)$-separated set of cardinality $|S_{n_i}|$,
by Lemma \ref{lemma-Fano}, for sufficiently large $n_i$,
\begin{align*}
  I(\mathcal P^{F_{n_i}}_*\nu_{n_i},\sigma_{F_{n_i}})\ge (1-\frac{1}{D})\log |S_{n_i}|-H(\frac{1}{D}).
\end{align*}
 This finishes the proof of Claim \ref{claim-E}.
\end{proof}

Now we proceed with the proof of Lemma \ref{prop-upper}.

For any $K\in F(G)$ with $e_G\in K$ and $0<\e_1<\min\{\frac{1}{2},\frac{\e}{{\rm diam}(\X,d)}\}$, for sufficiently large $n_i$, there exists a $1\le j\le l$
(here $j$ depends on $n_i$, whereas $l$ depends on $K$ and $\e_1$ but does not depend on $n_i$),
\begin{align*}
  &\frac{1}{|F_{m_j}|}I(\mathcal P^{F_{m_j}}_*(\mu_{n_i}),\tau_{F_{m_j},n_i})\\
  \ge & (1-\e_1)\frac{1}{|F_{n_i}|}I(\mathcal P^{F_{n_i}}_*\nu_{n_i},\sigma_{F_{n_i}})\text{ (by Claim \ref{claim-I})}\\
  \ge & (1-\e_1)\bigg((1-\frac{1}{D})\frac{\log |S_{n_i}|}{|F_{n_i}|}-\frac{H(\frac{1}{D})}{|F_{n_i}|}\bigg) \text{ (by Claim \ref{claim-E})}\\
  \ge & (1-\e_1)\bigg( (1-\frac{1}{D})\frac{\log \#\big(\X,\bar d_{F_{n_i}},(16D+4)\e\big)}{|F_{n_i}|} - \frac{H(\frac{1}{D})}{|F_{n_i}|}\bigg) \text{ (by \eqref{ineq-4-Sn})}.
\end{align*}

By choosing some subsequence of $\{n_i\}$ (we still denote it by $\{n_i\}$), for some $1\le j\le l$, the probability measures $\tau_{F_{m_j},n_i}$
converge to  $\tau_{F_{m_j}}={\rm Law}\big(\mathcal P^{F_{m_j}}(X),Y_{F_{m_j}}\big)$ in the weak$^*$ topology. Let $n_i\rightarrow \infty$. By (3) of Proposition \ref{prop-mutual1},
\begin{align*}
  \frac{1}{|F_{m_j}|}I\big(\mathcal P^{F_{m_j}}(X);Y_{F_{m_j}}\big)\ge (1-\e_1)(1-\frac{1}{D})\overline S \big( \X,\{F_{n_i}\},d,(16D+4)\e \big).
\end{align*}
By (1) of Proposition \ref{prop-mutual1}, the data-processing inequality,
\begin{align*}
  \frac{1}{|F_{m_j}|}I(X;Y_{F_{m_j}})\ge (1-\e_1)(1-\frac{1}{D})\overline S \big( \X,\{F_{n_i}\},d,(16D+4)\e \big).
\end{align*}
Let $(K,\e_1)$ be chosen from the pairs $(K_n,\frac{1}{n})$, where $K_n\in F(G)$ and $K_n\uparrow G$. The $F_{m_j}$'s above subject to $(K_n,\frac{1}{n})$ form a new F{\o}lner sequence. We denote this new F{\o}lner sequence by $\{T_n\}$ and then it follows that
\begin{align*}
  \underline R_{\mu}(\{T_n\},\e)\ge (1-\frac{1}{D})\overline S\big(\X,\{F_{n_i}\},d,(16D+4)\e\big).
\end{align*}
Passing $\{T_n\}$ and $\{F_{n_i}\}$ to any F{\o}lner sequence by Proposition \ref{prop-S} and Proposition \ref{prop-R},
we complete the proof of Proposition \ref{prop-upper}.
\end{proof}

\begin{proof}[\bf {Proof of Theorem \ref{theorem-L1}}] \

For $D>2$ and any F{\o}lner sequence $\{F_n\}$,
\begin{align*}
  \limsup_{\e\to 0}\frac{\sup_{\mu\in M(\X,G)}\underline R_{\mu}(\{F_n\},\e)}{|\log \frac{\e}{2}|}
  &\ge \limsup_{\e\to 0}\frac{(1-\frac{1}{D})\overline S\big(\X,\{F_n\},d,(32D+8)\e\big)}{|\log \e|}\\
  &\qquad \text{ (by Proposition \ref{prop-upper})}\\
  &=(1-\frac{1}{D})\limsup_{\e\to 0}\frac{\overline S\big(\X,\{F_n\},d,(32D+8)\e\big)}{|\log (32D+8)\e|}\\
  &=(1-\frac{1}{D}){\rm \overline{mdim}_M}(\X,G,d)\text{ (by Proposition \ref{mdim-equal})}.
\end{align*}
Leting $D\rightarrow \infty$, we have
\begin{align*}
  {\rm \overline{mdim}_M}(\X,G,d)\le\limsup_{\e\to 0}\frac{\sup_{\mu\in M(\X,G)}\underline R_{\mu}(\{F_n\},\e)}{|\log \e|}.
\end{align*}
And similarly,
\begin{align*}
  {\rm \underline{mdim}_M}(\X,G,d)\le\liminf_{\e\to 0}\frac{\sup_{\mu\in M(\X,G)}\underline R_{\mu}(\{F_n\},\e)}{|\log \e|}.
\end{align*}
Joint with Proposition \ref{prop-lower}, we obtain
\begin{align*}
  {\rm \overline{mdim}_M}(\X,G,d)&=\limsup_{\e\to 0}\frac{\sup_{\mu\in M(\X,G)}\overline{R}_{\mu}(\{F_n\},\e)}{|\log \e|}\\
  &=\limsup_{\e\to 0}\frac{\sup_{\mu\in M(\X,G)}\underline{R}_{\mu}(\{F_n\},\e)}{|\log \e|}
\end{align*}
and
\begin{align*}
  {\rm \underline{mdim}_M}(\X,G,d)&=\liminf_{\e\to 0}\frac{\sup_{\mu\in M(\X,G)}\overline{R}_{\mu}(\{F_n\},\e)}{|\log \e|}\\
  &=\liminf_{\e\to 0}\frac{\sup_{\mu\in M(\X,G)}\underline{R}_{\mu}(\{F_n\},\e)}{|\log \e|}.
\end{align*}

\end{proof}

\section{$L^{\infty}$ and $L^p$ $(p\ge 1)$ variational principles}

Modifying the distortion condition \eqref{condition-rate}, we can also define $L^{\infty}$ and $L^p$ $(p\ge 1)$ rate distortion functions.
Similarly, we have $L^{\infty}$ and $L^p$ $(p\ge 1)$ variational principles between metric mean dimensions and the corresponding rate distortion functions.

Let $(\mathcal{X},G)$ be a $G$-system with metric $d$. We define the {\it $L^{\infty}$ rate distortion function} of $(\mathcal{X},G)$ in the following way.

Let $\e>0$ and $\mu\in M(\X,G)$. For $F\in F(G)$, let $X: \Omega\rightarrow \X$ and $Y_g: \Omega\rightarrow \X, g\in F$ be random variables defined on some probability space $(\Omega,\p)$.
Assume $\mu={\rm Law} (X)$. We say $X$ and $Y=(Y_g)_{g\in F}$ are $(F,\e,\alpha)_{L^{\infty}}$-close for $\alpha>0$ if
  \begin{align*}
    \E\bigg(\frac{1}{|F|}\#\{g\in F:d(gX,Y_g)\ge\e\}\bigg)<\alpha.
  \end{align*}
Denote by $$R_{\mu,\infty}(\e,\alpha,F)=\inf_{X,Y \text{ are }(F,\e,\alpha)_{L^{\infty}}\text{-close}}I(X;Y).$$
For a F{\o}lner sequence $\{F_n\}$, we define
\begin{align*}
  \underline{R}_{\mu,\infty}(\{F_n\},\e,\alpha)=\liminf_{n\rightarrow\infty}\frac{R_{\mu,\infty}(\e,\alpha,F_n)}{|F_n|}
\end{align*}
and
\begin{align*}
  \overline{R}_{\mu,\infty}(\{F_n\},\e,\alpha)=\limsup_{n\rightarrow\infty}\frac{R_{\mu,\infty}(\e,\alpha,F_n)}{|F_n|}.
\end{align*}

\begin{proposition}\label{prop-R-infinity}
  Let $\{F_n\}$ and $\{H_n\}$ be any two F{\o}lner sequences and $\e,\alpha>0$. Then
  $$\overline{R}_{\mu,\infty}(\{H_n\},\e,2\alpha)\le \underline{R}_{\mu,\infty}(\{F_n\},\e,\alpha).$$
\end{proposition}
The proof is similar to that of Proposition \ref{prop-R}, we omit it here.

Since both $\underline{R}_{\mu,\infty}(\{F_n\},\e,\alpha)$ and $\overline{R}_{\mu,\infty}(\{F_n\},\e,\alpha)$ do not increase as $\alpha$ decreases, 
by Proposition \ref{prop-R-infinity}, the following limit exist and do not depend on the choice of the F{\o}lner sequence $\{F_n\}$.
$$R_{\mu,\infty}(\e):=\lim_{\alpha\rightarrow 0}\underline{R}_{\mu,\infty}(\{F_n\},\e,\alpha)=\lim_{\alpha\rightarrow 0}\overline{R}_{\mu,\infty}(\{F_n\},\e,\alpha).$$
We call $R_{\mu,\infty}(\e)$ the {\it $L^{\infty}$ rate distortion function}. 

The following theorem is the $L^{\infty}$ variational principles for metric mean dimension. The proof uses the same spirit of Theorem \ref{theorem-L1}). Since the $\bar d_F$ metric and $\tilde S(\X,G,d,\e)$ are not involved, 
the proof is simpler than that of Theorem \ref{theorem-L1} (but it is still complicated). We will put the proof in Appendix A. We note that for this theorem,  $(\X,d)$ need not have tame growth of covering numbers.
\begin{theorem}\label{theorem-Linfty}
\begin{align*}
  {\rm \overline{mdim}_M}(\X,G,d)&=\limsup_{\e\to 0}\frac{\sup_{\mu\in M(\X,G)}R_{\mu,\infty}(\e)}{|\log \e|}
\end{align*}
and
\begin{align*}
  {\rm \underline{mdim}_M}(\X,G,d)&=\liminf_{\e\to 0}\frac{\sup_{\mu\in M(\X,G)}R_{\mu,\infty}(\e)}{|\log \e|}.
\end{align*}
\end{theorem}
\begin{proof}
  See Appendix A.
\end{proof}

Fix $1\le p<\infty$. Let $F\in F(G)$, $X: \Omega\rightarrow \X$ and $Y_g: \Omega\rightarrow \X, g\in F$ be given as previous.
We say $X$ and $Y=(Y_g)_{g\in F}$ are $(F,\e)_{L^p}$-close if
  \begin{align*}
    \E\bigg(\frac{1}{|F|}\sum_{g\in F}d(gX,Y_g)^p\bigg)<\e^p.
  \end{align*}

Denote by $$R_{\mu,p}(\e,F)=\inf_{X,Y \text{ are }(F,\e)_{L^p}\text{-close}}I(X;Y).$$
The {\it $L^p$ rate distortion functions} are then defined
by $$\underline R_{\mu,p}(\{F_n\},\e)=\liminf_{n\rightarrow \infty}\frac{R_{\mu,p}(\e,F_n)}{|F_n|}
\text{ and } 
\overline R_{\mu,p}(\{F_n\}\e)=\limsup_{n\rightarrow \infty}\frac{R_{\mu,p}(\e,F_n)}{|F_n|},$$
where $\{F_n\}$ is a F{\o}lner sequence in $G$. 
When $p=1$, $\overline R_{\mu,1}(\{F_n\},\e)$ and $\underline R_{\mu,1}(\{F_n\},\e)$ coincide with $\overline R_{\mu}(\{F_n\},\e)$ and $\underline R_{\mu}(\{F_n\},\e)$ defined in Section 3, respectively.

Similar to Proposition \ref{prop-R}, we have
\begin{proposition}\label{prop-R-Lp}
  Let $\{F_n\}$ and $\{H_n\}$ be any two F{\o}lner sequences, $p\ge 1$ and $\e>0$. Then
  $$\overline{R}_{\mu,p}(\{H_n\},2\e)\le \underline{R}_{\mu,p}(\{F_n\},\e).$$
\end{proposition}

Applying the $L^1$ and $L^{\infty}$ variational principles, we can obtain the following $L^p$ $(p\ge1)$ variational principles under the condition that
$(\X,d)$ has tame growth of covering numbers.

\begin{theorem}\label{theorem-Lp}
  If $(\X,d)$ has tame growth of covering numbers, then for any $p\ge 1$,
  \begin{align*}
  {\rm \overline{mdim}_M}(\X,G,d)&=\limsup_{\e\to 0}\frac{\sup_{\mu\in M(\X,G)}\overline{R}_{\mu,p}(\{F_n\},\e)}{|\log \e|}\\
&=\limsup_{\e\to 0}\frac{\sup_{\mu\in M(\X,G)}\underline{R}_{\mu,p}(\{F_n\},\e)}{|\log \e|}
\end{align*}
and
\begin{align*}
  {\rm \underline{mdim}_M}(\X,G,d)&=\liminf_{\e\to 0}\frac{\sup_{\mu\in M(\X,G)}\overline{R}_{\mu,p}(\{F_n\},\e)}{|\log \e|}\\
&=\liminf_{\e\to 0}\frac{\sup_{\mu\in M(\X,G)}\underline{R}_{\mu,p}(\{F_n\},\e)}{|\log \e|},
\end{align*}
where $\{F_n\}$ is any F{\o}lner sequence.
\end{theorem}
\begin{proof}
  Let $p\ge 1,\alpha>0,\e>0$ and $\mu\in M(\X,G)$. For $F\in F(G)$, let $X: \Omega\rightarrow \X$ and $Y_g: \Omega\rightarrow \X, g\in F$ be random variables as in the definition of the rate distortion functions.

  If $X$ and $Y=(Y_g)_{g\in F}$ are $(F,\e)_{L^{p}}$-close,
  then by the H\"{o}lder inequality, it holds that
  \begin{align*}
    \E\bigg(\frac{1}{|F|}\sum_{g\in F}d(gX,Y_g)\bigg)<\bigg(\E\big(\frac{1}{|F|}\sum_{g\in F}d(gX,Y_g)^p\big)\bigg )^{\frac{1}{p}}<\e,
  \end{align*}
  i.e. $X$ and $Y=(Y_g)_{g\in F}$ are $(F,\e)$-close. And hence by the definition of the rate distortion functions,
  $$\underline R_{\mu}(\{F_n\},\e)\le \underline R_{\mu,p}(\{F_n\},\e),$$
  for any F{\o}lner sequence $\{F_n\}$.

  If $X$ and $Y=(Y_g)_{g\in F}$ are $(F,\e,\alpha)_{L^{\infty}}$-close for $\alpha>0$, i.e.
  \begin{align*}
    \E\bigg(\frac{1}{|F|}\#\{g\in F:d(gX,Y_g)\ge\e\}\bigg)<\alpha,
  \end{align*}
  then
  \begin{align*}
    \frac{1}{|F|}\sum_{g\in F}d(gX,Y_g)^p&\le \e^p+\frac{1}{|F|}\sum_{g\in F, d(gX,Y_g)\ge\e}d(gX,Y_g)^p\\
    &\le \e^p+\frac{1}{|F|}\#\{g\in F:d(gX,Y_g)\ge\e\}\cdot \big({\rm diam}(\X,d)\big)^p.
  \end{align*}
  And hence
  \begin{align*}
    \E\bigg(\frac{1}{|F|}\sum_{g\in F}d(gX,Y_g)^p\bigg)< \e^p+\alpha\big({\rm diam}(\X,d)\big)^p.
  \end{align*}
  Then it follows that for any $\e'>\e$, when $\alpha$ is sufficiently small,
  \begin{align*}
\bigg(\E\big(\frac{1}{|F|}\sum_{g\in F}d(gX,Y_g)^p\big)\bigg)^{\frac{1}{p}}< \e'.
  \end{align*}
  Hence $$\underline R_{\mu,p}(\{F_n\},\e')\le R_{\mu,\infty}(\e), \text{ for any }\e'>\e.$$

  The conclusion then follows by Theorem \ref{theorem-L1}, Theorem \ref{theorem-Linfty} and Proposition \ref{prop-R-Lp}.
\end{proof}

{\bf Acknowledgements}
This research is supported by NNSF of China (Grant No. 11790274, 11701275), National Basic Research Program of China (Grant No. 2013CB
834100) and Tianyuan Mathematical Center in Southwest China. The authors would like to thank Prof. Wen Huang and Dr. Yunping Wang for their valuable discussions and comments.
This work was started when the third named author stayed in the School of Mathematics and Statistics, the University of Sheffield. She was grateful for the kindly support there.

\appendix
\renewcommand{\appendixname}{Appendix~\Alph{section}}
 \section{Proof of Theorem \ref{theorem-Linfty}}
For the proof of Theorem \ref{theorem-Linfty}, we need he following Lemma (\cite[Lemma 17]{LT2}).
 \begin{lemma}\label{lemma-mutual-Linfty}
   Let $(\X,d)$ be a compact metric space with a finite subset $A$. Let $F\in F(G),\e>0$ and $0<\alpha\le \frac{1}{2}$.
   Suppose $S\subset A^F$ is a $2\e$-separated set with respect to the metric $d_F\big((x_g)_{g\in F},(y_g)_{g\in F}\big)$.
   Let $X=(X_g)_{g\in F}$ and $y=(Y_g)_{g\in F}$ be measurable maps from $\Omega$ to $\X^F$ such that $X$ is uniformly distributed over $S$ and
   \begin{align*}
    \E \big (\#\{g\in F: d(X_g,Y_g)\ge \e\}\big )< \alpha|F|.
   \end{align*}
   Then
   $$I(X;Y)\ge \log |S|-|F|H(\alpha)-\alpha|F|\log |A|.$$
 \end{lemma}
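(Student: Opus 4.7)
The plan is to adapt the standard Fano inequality argument. Write $I(X;Y)=H(X)-H(X|Y)=\log|S|-H(X|Y)$ and work to upper bound $H(X|Y)$ by $|F|H(\alpha)+\alpha|F|\log|A|$. Introduce the random ``error set'' $E=\{g\in F:d(X_g,Y_g)\ge\e\}\subset F$, which by hypothesis satisfies $\mathbb E|E|<\alpha|F|$. Then
\[
H(X|Y)\le H(X,E|Y)=H(E|Y)+H(X|E,Y),
\]
so it suffices to bound each of the two terms on the right.

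For the first term, $H(E|Y)\le H(E)=H(\mathbf{1}_E)$, where $\mathbf 1_E:F\to\{0,1\}$ is the indicator. By subadditivity, $H(\mathbf 1_E)\le\sum_{g\in F}H(\mathbf 1_E(g))$, and by concavity of $H$ together with $\mathbb E|E|/|F|<\alpha\le\tfrac12$ (so that we stay on the increasing branch of $H$), this is at most $|F|H(\alpha)$. This is essentially the same computation that appears in the standard Fano estimate.

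For the second term, I would exploit that $S$ is $2\e$-separated with respect to the \emph{sup} metric $d_F$. Given $(E,Y)$, call $x\in S$ \emph{compatible} if $d(x_g,Y_g)<\e$ for every $g\in F\setminus E$. The law of $X$ conditional on $(E,Y)$ is supported on this compatible set, since by definition of $E$ the true $X$ is always compatible. Now if $x,x'\in S$ are both compatible and agree on $E$, then $d(x_g,x'_g)<2\e$ for $g\notin E$ by the triangle inequality, while $d(x_g,x'_g)=0$ for $g\in E$; hence $d_F(x,x')<2\e$, forcing $x=x'$ by the separation assumption. So restriction to $E$ is injective on the compatible set, which therefore has cardinality at most $|A|^{|E|}$. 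Consequently $H(X|E=e,Y=y)\le|e|\log|A|$, and averaging gives $H(X|E,Y)\le\log|A|\cdot\mathbb E|E|<\alpha|F|\log|A|$.

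Combining the two bounds yields $H(X|Y)\le|F|H(\alpha)+\alpha|F|\log|A|$, and subtracting from $H(X)=\log|S|$ gives the desired inequality. The only genuinely delicate point is the injectivity argument in the second step, where one must carefully use the sup-metric formulation of the $2\e$-separation (rather than the averaged $\bar d_F$) so that closeness on $F\setminus E$ together with equality on $E$ propagates to the global estimate $d_F(x,x')<2\e$; everything else is routine manipulation of entropies.
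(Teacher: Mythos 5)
Your proof is correct, and it is essentially the argument the paper has in mind: the paper omits the proof, citing Lemma 2.6 of \cite{LT2}, whose proof is exactly this Fano-type decomposition $H(X|Y)\le H(E)+H(X|E,Y)$ with the concavity bound $H(E)\le |F|H(\alpha)$ and the injectivity-of-restriction bound $H(X|E,Y)\le \alpha|F|\log|A|$ coming from the $2\e$-separation in the sup metric. The only point glossed over (here as in \cite{LT2}) is that one should either assume $Y$ finite-valued, as the application permits via Remark \ref{remark-Yfinite-Lp}, or interpret $H(X|Y)$ via the $\sigma$-algebra generated by $Y$.
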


  \begin{lemma}\label{lemma-lower-Linfty}
  For $\e>0$ and $\mu\in M(\X,G)$, we have
  $$R_{\mu,\infty}(\e)\le S(\X,G,d,\e).$$
\end{lemma}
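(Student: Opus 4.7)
The plan is to mimic the proof of Lemma \ref{lemma-lower} (the $L^1$ lower bound), replacing the average metric $\bar d_{F_n}$ with the maximum metric $d_{F_n}$. Because $d_{F_n}(x,y)=\max_{g\in F_n}d(gx,gy)$ controls every coordinate simultaneously, the construction of a coupling from an open cover of $(\X,d_{F_n})$ yields an $L^\infty$ distortion condition with zero exceptional set, so the parameter $\alpha$ plays no real role.

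Concretely, fix a F{\o}lner sequence $\{F_n\}$. For each $n$, set $M=\#(\X,d_{F_n},\e)$ and choose an open cover $\{U_1,\ldots,U_M\}$ of $\X$ with ${\rm diam}(U_m,d_{F_n})<\e$ for every $m$. Pick representatives $p_m\in U_m$, and define $f:\X\to\{p_1,\ldots,p_M\}$ by letting $f(x)=p_m$ for the smallest $m$ with $x\in U_m$; then $d_{F_n}(x,f(x))<\e$, i.e. $d(gx,gf(x))<\e$ for every $g\in F_n$. Let $X$ have law $\mu$ and put $Y=(gf(X))_{g\in F_n}$. Then for every $\alpha>0$,
$$\E\left(\frac{1}{|F_n|}\#\{g\in F_n: d(gX,Y_g)\ge\e\}\right)=0<\alpha,$$
so $(X,Y)$ is $(F_n,L^\infty,\e,\alpha)$-close. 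Since $Y$ is a deterministic function of $f(X)$, which takes at most $M$ values, we get
$$I(X;Y)\le H(Y)\le H(f(X))\le\log M=\log\#(\X,d_{F_n},\e).$$

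This gives $R_{\mu,\infty}(\e,\alpha,F_n)\le\log\#(\X,d_{F_n},\e)$ for every $\alpha>0$ and every $n$. Dividing by $|F_n|$ and letting $n\to\infty$ yields $R_{\mu,\infty}(\e,\alpha)\le S(\X,G,d,\e)$, and letting $\alpha\to 0$ gives the desired inequality $R_{\mu,\infty}(\e)\le S(\X,G,d,\e)$. There is no real obstacle here: the whole point of the $L^\infty$ formulation is that a coupling built from an $(d_{F_n},\e)$-cover automatically satisfies the distortion condition coordinate-wise, making this half of the $L^\infty$ variational principle strictly easier than the $L^1$ case (no tame growth hypothesis is needed, and no quasi-tiling machinery appears).
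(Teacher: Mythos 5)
Your proof is correct and is essentially identical to the paper's own argument: both build the map $f$ from a minimal $(d_{F_n},\e)$-cover, set $Y=(gf(X))_{g\in F_n}$ so that the $L^\infty$ distortion condition holds with empty exceptional set for every $\alpha>0$, and bound $I(X;Y)\le H(Y)\le\log\#(\X,d_{F_n},\e)$. Your explicit handling of the $\alpha\to 0$ limit at the end is a small tidy-up of a step the paper leaves implicit, but the route is the same.
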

\begin{proof}
Let $\{F_n\}$ be a F{\o}lner sequence in $G$. For $n>0$, denote by $M=\#(\X,d_{F_n},\e)$ and
let $\{U_1,\ldots,U_M\}$ be an open cover of $\X$ with ${\rm diam}(U_m,d_{F_n})<\e$ for each $1\le m\le M$.
Choose a point $p_m\in U_m$ for each $m$. For any $x\in \X$, let $m$ be the smallest number satisfying $x\in U_m$. Then by setting $f(x)=p_m$ we can define a map $f:\X\rightarrow \{p_1,\ldots,p_M\}$ and hence $d_{F_n}\big(x,f(x)\big)<\e$.
Let $X$ be a random variable with ${\rm Law}(X)=\mu$. Then $d_{F_n}\big(X,f(X)\big)<\e$ almost surely, which implies
$$\E\bigg(\frac{1}{|F_n|}\#\{g\in F_n:d\big(gX,gf(X)\big)\ge\e\}\bigg)=0.$$
Let $Y=\big(gf(X)\big)_{g\in F_n}$. Obviously $X$ and $Y$ are $(F_n,\e,\alpha)_{L^{\infty}}$-close for any $\alpha>0$.
Hence $$\overline{R}_{\mu,\infty}(\e,\alpha,F_n)\le I(X;Y)\le H(Y)\le \log M=\log\#(\X,d_{F_n},\e).$$
Dividing by $|F_n|$ and letting $n\rightarrow\infty$, we have
$$R_{\mu,\infty}(\e)\le S(\X,G,d,\e).$$
\end{proof}

\begin{proposition}\label{prop-upper-Linfty}
  For any $\e>0$ there exists $\mu\in M(\X,G)$ such that
  \begin{align}\label{mu-Linfty}
    R_{\mu,\infty}(\e)\ge S(\X,G,d,12\e).
  \end{align}
\end{proposition}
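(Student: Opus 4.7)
The plan is to parallel the proof of Proposition \ref{prop-upper} (the $L^1$ case), replacing $\bar d_F$ by $d_F$ throughout and substituting Lemma \ref{lemma-mutual-Linfty} for Lemma \ref{lemma-Fano}. Because the $L^\infty$ argument never passes between covering numbers at different scales, no tame-growth hypothesis is required and the constant streamlines from $16D+4$ to $12$. First, using the F{\o}lner sequence $\{F_n\}$ from Lemma \ref{lemma-folner}, I would choose for each $n$ a maximal $6\e$-separated set $S_n\subset \X$ under $d_{F_n}$, which gives $|S_n|\ge \#(\X,d_{F_n},12\e)$. Put $\nu_n=\frac{1}{|S_n|}\sum_{x\in S_n}\delta_x$, $\mu_n=\frac{1}{|F_n|}\sum_{g\in F_n}\nu_n\circ g^{-1}$, and extract along a tempered subsequence $\{F_{n_i}\}$ a weak$^*$ limit $\mu\in M(\X,G)$.

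Given a finite measurable partition $\mathcal P=\{P_1,\ldots,P_M\}$ of $\X$ with $\mathrm{diam}(P_m,d)<\e$ and $\mu(\partial P_m)=0$, choose representatives $p_m\in P_m$ and set $\mathcal P(x)=p_m$ for $x\in P_m$. The pointwise bound $d(x,\mathcal P(x))<\e$ makes $\mathcal P^{F_n}(S_n)$ at least $4\e$-separated under $d_{F_n}$, which is $2(2\e)$-separated and therefore fits Lemma \ref{lemma-mutual-Linfty} at scale $2\e$. For any random variables $X\sim\mu$ and finitely supported $Y=(Y_g)_{g\in F}$ that are $(F,L^\infty,\e,\alpha)$-close, the distribution $\tau_F$ of $(\mathcal P^F(X),Y)$ satisfies
\begin{align*}
\E\bigl(\#\{g\in F:d(\mathcal P(gX),Y_g)\ge 2\e\}\bigr)\le \E\bigl(\#\{g\in F:d(gX,Y_g)\ge \e\}\bigr)<\alpha|F|,
\end{align*}
since $d(gX,\mathcal P(gX))<\e$ pointwise.

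From this point I would run exactly the tile-coupling scheme of Proposition \ref{prop-upper}: form a transport-optimal coupling $\pi_{F,n}$ of $(\mathcal P^F_*\mu_n,\mathcal P^F_*\mu)$, compose it with the conditional law of $Y$ given $\mathcal P^F(X)$ to produce $\tau_{F,n}$, and then stitch the small-tile couplings using the tiling $\mathcal T$ from Lemma \ref{lemma-folner} into
\begin{align*}
\sigma_{F_{n_i},g}(y\mid x)=\prod_{j,c}\tau_{F_{m_j},n_i}\bigl(y_{F_{m_j}cg^{-1}}\bigm| x_{F_{m_j}cg^{-1}}\bigr)\cdot \prod_{h\in R_g}\delta_a(y_h),
\end{align*}
and $\sigma_{F_{n_i}}=\frac{1}{|F_{n_i}|}\sum_{g\in F_{n_i}}\sigma_{F_{n_i},g}$. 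Convexity and subadditivity of mutual information then yield, for some index $j$, the estimate $(1-\e_1)\frac{1}{|F_{n_i}|}I(\mathcal P^{F_{n_i}}_*\nu_{n_i},\sigma_{F_{n_i}})\le\frac{1}{|F_{m_j}|}I(\mathcal P^{F_{m_j}}_*\mu_{n_i},\tau_{F_{m_j},n_i})$, while Lemma \ref{lemma-mutual-Linfty} applied to the uniform law on the $4\e$-separated set $\mathcal P^{F_{n_i}}(S_{n_i})$ bounds the left side below by $\frac{1}{|F_{n_i}|}\bigl(\log|S_{n_i}|-|F_{n_i}|H(\alpha')-\alpha'|F_{n_i}|\log M\bigr)$ for an explicit $\alpha'$ slightly larger than $\alpha$. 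Letting $n_i\to\infty$, then $K\to G$, $\e_1\to 0$, $\alpha\to 0$, invoking the data-processing inequality to pass from $\mathcal P^{F_{m_j}}(X)$ to $X$, and using $\tfrac{1}{|F_{n_i}|}\log|S_{n_i}|\to S(\X,G,d,12\e)$ along the subsequence, delivers $R_{\mu,\infty}(\e)\ge S(\X,G,d,12\e)$.

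The main obstacle will be verifying the $L^\infty$ distortion of the stitched coupling $\sigma_{F_{n_i}}$. In the $L^1$ case small errors are absorbed by averaging, but here one must count expected bad coordinates: the boundary region $R_g$ has cardinality $|R_g|<\e_1|F_{n_i}|$ and contributes at most $\e_1|F_{n_i}|$ bad coordinates, while the tile interiors contribute in expectation roughly $(1+\e_1)\alpha|F_{n_i}|$ bad coordinates after summation against the densities $t_j=\rho_{\mathcal T}(F_{m_j},F_{n_i})$. Thus $\sigma_{F_{n_i}}$ meets an $(F_{n_i},L^\infty,2\e,\alpha')$-closeness with $\alpha'=\alpha+O(\e_1)$, which is exactly the regime of Lemma \ref{lemma-mutual-Linfty}, and both the $H(\alpha')$ and $\alpha'\log M$ corrections vanish in the iterated limit.
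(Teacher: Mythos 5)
Your proposal is correct and follows essentially the same route as the paper's proof: the same maximal $6\e$-separated sets and weak$^*$ limit measure, the same partition/coupling/tile-stitching scheme carried over from the $L^1$ case, the same convexity--subadditivity reduction to a single tile shape, and the same accounting of bad coordinates (boundary region $R_g$ plus tile interiors) to verify the $L^\infty$ distortion of $\sigma_{F_{n_i}}$ before applying Lemma \ref{lemma-mutual-Linfty}. The paper's Claim \ref{claim-E-Linfty} packages your $\alpha'=\alpha+O(\e_1)$ bound as $3\alpha|F_{n_i}|$ by taking $\e_1<\alpha$, but this is only a cosmetic difference.
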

\begin{proof}
Let $\{F_n\}$ be the F{\o}lner sequence in $G$ constructed as in Lemma \ref{lemma-folner}.

For each $F_n$ we choose $S_n$ to be a maximal $6\e$-separated set of $\X$ with respect to the metric $d_{F_n}$. Then
\begin{align}\label{ineq-A-Sn-Linfty}
  |S_n|\ge \#(\X,d_{F_n},12\e).
\end{align}

Define $$\nu_n=\frac{1}{|S_n|}\sum_{x\in S_n}\delta_x$$
and
$$\mu_n=\frac{1}{|F_n|}\sum_{g\in F_n}\nu_n\circ g^{-1}.$$

As in the proof of Proposition \ref{prop-upper}, we first choose a tempered subsequence $\{F_{n_i}\}$ of $\{F_n\}$,
then choose a convergence subsequence of $\{\mu_{n_i}\}_{i=1}^{\infty}$ in the weak$^*$ topology and assume it converges to $\mu$.
Hence $\mu\in M(\X,G)$ and we will show it satisfies the inequality \eqref{mu-Linfty}.
For simplicity, we still denote this subsequence by $\{\mu_{n_i}\}_{i=1}^{\infty}$.

Let $\mathcal P=\{P_1,\ldots,P_M\}$ be a measurable partition of $\X$ with ${\rm diam}(P_m,d)<\e$ and $\mu(\partial P_m)=0$ for each $1\le m\le M$.

Assign each $P_m$ a point $p_m\in P_m$ and set $A=\{p_1,\ldots,p_M\}$. Denote by $\mathcal P(x)=p_m$ for $x\in P_m$. Then
\begin{align}\label{ineq-A-Px-Linfty}
  d\big(x,\mathcal P(x)\big)<\e.
\end{align}
Let $\mathcal P^{F}(x)=\big(\mathcal P(gx)\big)_{g\in F}$ for $F\in F(G)$. Recall that we also use $d_F$ to denote the metric on $\X^F$ for $F\in F(G)$ (see \eqref{metric} for the definition).
By \eqref{ineq-A-Px-Linfty}, we have $d_{F_n}\big((gx)_{g\in F_n}, \mathcal P^{F_n}(x)\big)<\e$ for any $x\in\X$.
For any two distinct points $x,y\in S_n$, we have
\begin{align*}
  d_{F_n}\big(\mathcal P^{F_n}(x),\mathcal P^{F_n}(y)\big)
  &\ge d_{F_n}(x,y)-d_{F_n}\big((gx)_{g\in F_n}, \mathcal P^{F_n}(x)\big)-d_{F_n}\big((gy)_{g\in F_n}, \mathcal P^{F_n}(y)\big)\\
  &> 6\e-2\e=4\e.
\end{align*}
Hence the set
$$\mathcal P^{F_n}(S_n)=\{\mathcal P^{F_n}(x)| x\in S_n\}\subset A^{F_n}$$
is a $4\e$-separated set of $\X^{F_n}$ with respect to the metric $d_{F_n}$.
Moreover, since $\nu_{F_n}$ is the uniform distribution over $S_n$, the push-forward measure $\mathcal P^{F_n}_* \nu_n$ is also the uniform distribution measure over
$\mathcal P^{F_n}(S_n)$. Note that $|\mathcal P^{F_n}(S_n)|=|S_n|$.

Let $0<\alpha<\frac{1}{4}$. let $X: \Omega\rightarrow \X$ be a random variable defined on some probability space $(\Omega,\p)$ such that the law of $X$ is given by $\mu$.
For $F\in F(G)$, let $Y_{F,g}: \Omega\rightarrow \X$ ($g\in F$) be random variables defined on the same probability space $(\Omega,\p)$ such that
$Y_F=(Y_{F,g})_{g\in F}$  and $X$ are $(F,\e,\alpha)_{L^{\infty}}$-close, i.e.
\begin{align}\label{ineq-4-XY-Linfty}
  \E\bigg(\frac{1}{|F|}\#\{g\in F:d(gX,Y_{F,g})\ge\e\}\bigg)<\alpha.
\end{align}
We can assume the distribution of $Y_F$ is supported on a finite set $\Y_F\subset \X^F$.
By the Data-processing inequality, $$I(X;Y_F)\ge I\big(\mathcal P^F(X);Y_F\big).$$
Let $\tau_F={\rm Law}\big(\mathcal P^F(X),Y_F\big)$ be the law of $(\mathcal P^F(X),Y_F)$, which is supported on $A^F\times \Y_F$.
Since $d\big(gX,\mathcal P(gX)\big)<\e$, it follows that
\begin{align*}
  \big\{g\in F:d\big(\mathcal P(gX),Y_{F,g}\big)\ge2\e\big\}\subset \{g\in F:d(gX,Y_{F,g})\ge\e\}.
\end{align*}
Denote by $f_F(x,y)=\#\{g\in F: d(x_g,y_g)\ge 2\e\}$ for $x=(x_g)_{g\in F}\in A^F$ and $y=(y_g)_{g\in F}\in \Y_F$.
Thus
\begin{align}\label{ineq-A-tau}
  \E_{\tau_F}f_F(x,y)&:=
  \int_{A^F\times \Y_F}f_F(x,y)d\tau_F(x,y)\nonumber\\
  &=\E\bigg(\#\big\{g\in F: d\big(\mathcal P(gX),Y_{F,g}\big)\ge2\e\big\}\bigg)\nonumber\\
  &<\alpha |F|.
\end{align}

For each $n\ge 1$, we consider the couplings of $(\mathcal P^F_*\mu_n,\mathcal P^F_*\mu)$. Choose a probability measure $\pi_{F,n}$ that minimizes the following integral
  $$\int_{A^F\times A^F}\bar d_F(x,y)d\pi(x,y)$$
  among all such couplings $\pi$. Also similar to Claim 30 of \cite{LT2}, the sequence $\pi_{F,n_i}$ converges to $(\mathcal P^F\times\mathcal P^F)_*\mu$ in the weak$^*$ topology.

Compose $\pi_{F,n}$ and $\tau_F$ to produce a coupling $\tau_{F,n}$ of $\big(\mathcal P^F_*\mu_n,{\rm Law}(Y_F)\big)$ by the following way:
$$\tau_{F,n}(x,y)=\sum_{x'\in A^F}\pi_{F,n}(x,x')\p\big(Y_F=y|\mathcal P^F(X)=x'\big), \qquad (x\in A^F,y\in\Y_F).$$
We note here that the sequence $\tau_{F,n_i}$ converges to $\tau_F$ in the weak$^*$ topology and hence by \eqref{ineq-A-tau},
\begin{align}\label{ineq-A-tau2}
  \E_{\tau_{F,n_i}}f_F(x,y)=\int_{A^F\times \Y_F}f_F(x,y)d\tau_{F,n_i}(x,y)<\alpha |F|
\end{align}
for all sufficiently large $n_i$.

Similar to the proof of Proposition \ref{prop-upper}, for $x\in \bigcup_{g\in F_n}\mathcal P^F(gS_n)$ and $y\in \X^F$, we define a conditional probability mass function $\tau_{F,n}(y|x)$ by
$$\tau_{F,n}(y|x)=\frac{\tau_{F,n}(x,y)}{\mathcal P^F_*\mu_n(x)}.$$

For any $K\in F(G)$ with $e_G\in K$ and $0<\e_1<\alpha$, as in Proposition \ref{prop-upper}, by Lemma \ref{lemma-folner}, there exists $\mathcal T$, a finite tiling of $G$,
satisfying conditions (C1) and (C2) in Proposition \ref{prop-upper}:
 \begin{enumerate}
  \item[(C1)] $\mathcal{T}$ has shapes $\{F_{m_1},\ldots,F_{m_l}\}$ consisted with F{\o}lner sets in $\{F_n\}$ and each $F_{m_j}$
  is $(K,\e_1)$-invariant;
  \item[(C2)] for sufficiently large $i$ (hence $F_{n_i}\in F(G)$ is sufficiently invariant), for each $1\le j\le l$, the family of sets $\{C_jg^{-1}\}_{g\in F_{n_i}}$ covers a subset $\tilde F_{n_i}\subset F_{n_i}$ with
  $|\tilde F_{n_i}|>(1-\e_1)|F_{n_i}|$ at most $(1+\e_1)\rho_{\mathcal T}(F_{m_j},F_{n_i})\frac{|F_{n_i}|}{|F_{m_j}|}$-many times, where $C_j$ is the center of the shape $F_{m_j}$.
\end{enumerate}

For $a\in \X$, $x=(x_g)_{g\in F_{n_i}}\in \mathcal P^{F_{n_i}}(S_{n_i})$ and $g\in F_{n_i}$, we define probability mass functions $\sigma_{F_{n_i},g}(\cdot|x)$ and $\sigma_{F_{n_i}}(\cdot|x)$ on $\X^{F_{n_i}}$ as exactly as \eqref{def-measure1} and \eqref{def-measure2} respectively:\\
for $y=(y_g)_{g\in F_{n_i}}\in \X^{F_{n_i}}$,
\begin{align}\label{def-measure1-Linfty}
  \sigma_{F_{n_i},g}(y|x)=\prod_{j=1}^l\prod_{c\in C_j,F_{m_j}c\subset F_{n_i}g,cg^{-1}\in \tilde{F}_{n_i}}\tau_{F_{m_j},n_i}(y_{F_{m_j}cg^{-1}}|x_{F_{m_j}cg^{-1}})\cdot \prod_{h\in R_g}\delta_a(y_h)
\end{align}
and 
\begin{align}\label{def-measure2-Linfty}
  \sigma_{F_{n_i}}(y|x)=\frac{1}{|F_{n_i}|}\sum_{g\in F_{n_i}}\sigma_{F_{n_i},g}(y|x).
\end{align}

Here we recall that $$y_{F_{m_j}cg^{-1}}=(y_h)_{h\in F_{m_j}cg^{-1}}\in \X^{F_{m_j}cg^{-1}},$$
$$x_{F_{m_j}cg^{-1}}=(x_h)_{h\in F_{m_j}cg^{-1}}\in \mathcal P^{F_{m_j}cg^{-1}}(S_{n_i})$$
and
$$R_g=F_{n_i}\setminus \bigg(\bigsqcup_{j=1}^l\bigsqcup_{c\in C_j,F_{m_j}c\subset F_{n_i}g,cg^{-1}\in \tilde{F}_{n_i}}F_{m_j}cg^{-1}\bigg).$$
Moreover, when $n_i$ is large enough, $|R_g|< \e_1 |F_{n_i}|$.

Then as exactly as Claim \ref{claim-I}, when $n_i$ is large enough, there exists some $1\le j\le l$ such that
\begin{align}\label{claim-I-Linfty}
  (1-\e_1)\frac{1}{|F_{n_i}|}I(\mathcal P^{F_{n_i}}_*\nu_{n_i},\sigma_{F_{n_i}})\le \frac{1}{|F_{m_j}|}I\big(\mathcal P^{F_{m_j}}_*(\mu_{n_i}),\tau_{F_{m_j},n_i}\big).
\end{align}

Denote by $\E_{\mathcal P^{F_{n_i}}_*\nu_{n_i},\sigma_{F_{n_i}}}f_{F_{n_i}}(x,y)$ the expected value of the function $f_{F_{n_i}}(x,y)$ with respect to the probability measure
$\mathcal P^{F_{n_i}}_*\nu_{n_i}(x)\sigma_{F_{n_i}}(y|x)$.

\begin{claim}\label{claim-E-Linfty}
For sufficiently large $n_i$,
\begin{align*}
  \E_{\mathcal P^{F_{n_i}}_*\nu_{n_i},\sigma_{F_{n_i}}}f_{F_{n_i}}(x,y)< 3\alpha|F_{n_i}|.
\end{align*}
\end{claim}
\begin{proof}[{\bf Proof of Claim \ref{claim-E-Linfty}}]
  By \eqref{def-measure1-Linfty} and \eqref{def-measure2-Linfty}, the definitions of probability mass functions $\sigma_{F_{n_i},g}(\cdot|x)$ ($g\in F_{n_i}$) and $\sigma_{F_{n_i}}(\cdot|x)$, we have
\begin{align*}
  \E_{\mathcal P^{F_{n_i}}_*\nu_{n_i},\sigma_{F_{n_i}}}f_{F_{n_i}}(x,y)=\frac{1}{|F_{n_i}|}\sum_{g\in F_{n_i}}\E_{\mathcal P^{F_{n_i}}_*\nu_{n_i},\sigma_{F_{n_i},g}} f_{F_{n_i}}(x,y)
\end{align*}
and
\begin{align*}
  &\E_{\mathcal P^{F_{n_i}}_*\nu_{n_i},\sigma_{F_{n_i},g}}f_{F_{n_i}}(x,y)\\
  \le & \sum_{j=1}^l\sum_{c\in C_j,F_{m_j}c\subset F_{n_i}g,cg^{-1}\in\tilde F_{n_i}}\E_{\mathcal P^{F_{m_j}}_*((cg^{-1})_*\nu_{n_i}),\tau_{F_{m_j},n_i}}
  f_{F_{m_j}}(x',y')+|R_g|,
\end{align*}
where $x,y$ are random points in $\X^{F_{n_i}}$ and $x',y'$ appear in $f_{F_{m_j}}(x',y')$ are in $\X^{F_{m_j}}$.

When $F_{n_i}$ is sufficiently invariant, $|R_g|<\e_1|F_{n_i}|$. Hence
\begin{align*}
  &\E_{\mathcal P^{F_{n_i}}_*\nu_{n_i},\sigma_{F_{n_i}}}f_{F_{n_i}}(x,y)\\
  \le& \frac{1}{|F_{n_i}|}\sum_{j=1}^l\sum_{g\in F_{n_i}}\sum_{c\in C_j,F_{m_j}c\subset F_{n_i}g,cg^{-1}\in\tilde F_{n_i}}
  \E_{\mathcal P^{F_{m_j}}_*((cg^{-1})_*\nu_{n_i}),\tau_{F_{m_j},n_i}}f_{F_{m_j}}(x',y')+\e_1|F_{n_i}|\\
  \le& \frac{1}{|F_{n_i}|}\sum_{j=1}^l\sum_{h\in \tilde F_{n_i}}(1+\e_1)t_j\frac{|F_{n_i}|}{|F_{m_j}|}\E_{\mathcal P^{F_{m_j}}_*(h_*\nu_{n_i}),\tau_{F_{m_j},n_i}}f_{F_{m_j}}(x',y')+\e_1|F_{n_i}|\\
  &\qquad \text{ (by condition (C2) and recall here } t_j=\rho_{\mathcal T}(F_{m_j},F_{n_i})\text{)}\\
  \le& \sum_{j=1}^l(1+\e_1)t_j\frac{|F_{n_i}|}{|F_{m_j}|}\E_{\mathcal P^{F_{m_j}}_*(\frac{1}{|F_{n_i}|}\sum_{h\in F_{n_i}}h_*\nu_{n_i}),\tau_{F_{m_j},n_i}}f_{F_{m_j}}(x',y')+\e_1|F_{n_i}|\\
  =&\sum_{j=1}^l(1+\e_1)t_j\frac{|F_{n_i}|}{|F_{m_j}|}\E_{\mathcal P^{F_{m_j}}_*\mu_{n_i},\tau_{F_{m_j},n_i}}f_{F_{m_j}}(x',y')+\e_1|F_{n_i}|\\
  =&\sum_{j=1}^l(1+\e_1)t_j\frac{|F_{n_i}|}{|F_{m_j}|}\int_{A^{F_{m_j}}\times \Y_{F_{m_j}}}f_{F_{m_j}}(x',y')d\tau_{F_{m_j},n_i}(x,y)+\e_1|F_{n_i}|\\
  =&\sum_{j=1}^l(1+\e_1)t_j\frac{|F_{n_i}|}{|F_{m_j}|}\E_{\tau_{F_{m_j},n_i}}f_{F_{m_j}}(x',y')+\e_1|F_{n_i}|.
\end{align*}

Recall that $0<\e_1<\alpha<\frac{1}{4}$ and $\sum_{j=1}^lt_j\le 1$. By \eqref{ineq-A-tau2}, for sufficiently large $n_i$, we have
\begin{align*}
  \E_{\tau_{F_{m_j},n_i}}f_{F_{m_j}}(x',y')<\alpha|F_{m_j}|, \text{ for each }1\le j\le l.
\end{align*}
Hence for sufficiently large $n_i$,
\begin{align*}
  \E_{\mathcal P^{F_{n_i}}_*\nu_{n_i},\sigma_{F_{n_i}}}f_{F_{n_i}}(x,y)&< \big((1+\e_1)\alpha+\e_1\big)|F_{n_i}|\\
  &<3\alpha|F_{n_i}|.
\end{align*}
This finishes the proof of Claim \ref{claim-E-Linfty}.
\end{proof}

Note that the set $\mathcal P^{F_{n_i}}(S_{n_i})=\{\mathcal P^{F_{n_i}}(x)| x\in S_{n_i}\}\subset A^{F_{n_i}}$ ($|\mathcal P^{F_{n_i}}(S_{n_i})|=|S_{n_i}|$)
is a $4\e$-separated set of $\X^{F_{n_i}}$ with respect to the metric $d_{F_{n_i}}$ and
the push-forward measure $\mathcal P^{F_{n_i}}_* \nu_{n_i}$ is the uniform distribution measure over
$\mathcal P^{F_{n_i}}(S_{n_i})$.
By Claim \ref{claim-E-Linfty} and Lemma \ref{lemma-mutual-Linfty}, for sufficiently large $n_i$,
\begin{align}
  \frac{1}{|F_{n_i}|}I(\mathcal P^{F_{n_i}}_*\nu_{n_i},\sigma_{F_{n_i}})\ge \frac{1}{|F_{n_i}|}\log |S_{n_i}|-3\alpha\log M-H(3\alpha).
\end{align}

It follows from \eqref{ineq-A-Sn-Linfty}, \eqref{claim-I-Linfty} and Claim \ref{claim-E-Linfty} that for sufficiently large $n_i$,
there exists $1\le j\le l$ ($j$ depends on $n_i$ and $l$ is independent on $n_i$) such that
\begin{align*}
  &\frac{1}{|F_{m_j}|}I\big(\mathcal P^{F_{m_j}}_*(\mu_{n_i}),\tau_{F_{m_j},n_i}\big)\\
  \ge &(1-\e_1)\big(\frac{1}{|F_{n_i}|}\log |\#(\X,d_{F_{n_i}},12\e)|-3\alpha\log M-H(3\alpha)\big ).
\end{align*}

By choosing some subsequence of $\{n_i\}$ (we still denote it by $\{n_i\}$), for some $1\le j\le l$, the probability measures $\tau_{F_{m_j},n_i}$
converge to  $\tau_{F_{m_j}}={\rm Law}\big(\mathcal P^{F_{m_j}}(X),Y_{F_{m_j}}\big)$ in the weak$^*$ topology. Let $n_i\rightarrow \infty$.
By (3) of Proposition \ref{prop-mutual1},
\begin{align*}
  \frac{1}{|F_{m_j}|}I\big(\mathcal P^{F_{m_j}}(X);Y_{F_{m_j}}\big)\ge (1-\e_1)\big(S(\X,G,d,12\e)-3\alpha\log M-H(3\alpha)\big ).
\end{align*}
By (1) of Proposition \ref{prop-mutual1}, the data-processing inequality,
\begin{align*}
  \frac{1}{|F_{m_j}|}I(X;Y_{F_{m_j}})\ge (1-\e_1)\big(S(\X,G,d,12\e)-3\alpha\log M-H(3\alpha)\big ).
\end{align*}

Let $(K,\e_1)$ be chosen from the pairs $(K_n,\frac{1}{n})$, where $K_n\in F(G)$ and $K_n\uparrow G$. The $F_{m_j}$'s above subject to $(K_n,\frac{1}{n})$ form a new F{\o}lner sequence. We denote this new F{\o}lner sequence by $\{T_n\}$ and then it follows that
\begin{align*}
  \underline R_{\mu,\infty}(\{T_n\},\e,\alpha)\ge S(\X,G,d,12\e)-3\alpha\log M-H(3\alpha).
\end{align*}
Let $\alpha\rightarrow 0$. Noting that $R_{\mu,\infty}(\e)$ is independent on the choice of F{\o}lner sequences, we have
\begin{align*}
  R_{\mu,\infty}(\e)\ge S(\X,G,d,12\e).
\end{align*}
This completes the proof of Proposition \ref{prop-upper-Linfty}.
\end{proof}

Theorem \ref{theorem-Linfty} then follows from Lemma \ref{lemma-lower-Linfty} and Proposition \ref{prop-upper-Linfty}.


\begin{thebibliography}{99}
\bibitem{ADP} Y. Ahn, D. Dou and K. K. Park, Entropy dimension and its variational principle, {\it Studia Math.}, {\bf 199} (2010), no. 3, 295-309.
\bibitem{Ca} M. De Carvalho, Entropy dimension of dynamical systems, {\it Portugal. Math.}, {\bf 54} (1997), no. 1, 19-40.
\bibitem{C} M. Coornaert, Topological dimension and dynamical systems, Universitext, Springer, 2015. Translation from the French language edition: Dimension topologique et syst\`{e}mes dynamiques by M. Coornaert, Cours sp\'{e}cialis\'{e}s 14, Soci\'{e}t\'{e} Math\'{e}matique de France, Paris, 2005.
\bibitem{CK} M. Coornaert and F. Krieger, Mean topological dimension for actions of discrete amenable groups, Discrete Contin. Dyn. Syst. 13 (2005) 779--793.
\bibitem{D} D. Dou, Minimal subshifts of arbitrary mean topological dimension, Discrete Contin. Dyn. Syst. 37 (2017), no. 3, 1411-1424.
\bibitem{DHP1} D. Dou, W. Huang and K. K. Park, Entropy dimension of topological dynamics, {\it Trans. Amer. Math. Soc.}, {\bf 363} (2011), 659-680.
\bibitem{DHP2} D. Dou, W. Huang and K. K. Park, Entropy dimension of measure preserving systems, {\it Trans. Amer. Math. Soc.}, {\bf 371} (2019), 7029-7065.
\bibitem{DHZ} Tomasz Downarowicz, Dawid Huczek and Guohua Zhang, Tilings of amenable groups, J. Reine Angew. Math., 747 (2019), 277-298.
\bibitem{FP}S. Ferenczi and K. K. Park, Entropy dimensions and a class of constructive examples, {\it Discrete Cont. Dyn. Syst.}, {\bf 17} (2007), no. 1, 133-141.
\bibitem{Go} T N T Goodman, Topological sequence entropy, Proceedings of the London Mathematical Society, 1974, 3(2): 331-350.
\bibitem{Gr} M. Gromov, Topological invariants of dynamical systems and spaces of holomorphic maps, Part I, Math. Phys. Anal. Geom. 2 (1999), 323--415.
\bibitem{Gu1} Y. Gutman, Embedding topological dynamical systems with periodic points in cubical shifts, Ergodic Theory and Dynamical Systems, 2015: 1-27.
\bibitem{Gu2} Y. Gutman, Mean dimension and Jaworski-type theorems, Proceedings of the London Mathematical Society, 2015, 111(4): 831-850.
\bibitem{GLT} Y. Gutman, E. Lindenstrauss and M. Tsukamoto, Mean dimension of $\Z^k$-actions, Geometric and Functional Analysis, 2016, 26(3): 778-817.
\bibitem{GT} Y. Gutman and M. Tsukamoto, Mean dimension and a sharp embedding theorem: extensions of aperiodic subshifts, Ergodic Theory and Dynamical Systems, 2014, 34(06): 1888-1896.
\bibitem{H} B. Hayes, Metric mean dimension for algebraic actions of sofic groups, Transactions of the American Mathematical Society, 2017.
\bibitem{KD} T. Kawabata and A. Dembo, The rate-distortion dimension of sets and measures, IEEE transactions on information theory, 1994, 40(5): 1564-1572.
\bibitem{KL} D. Kerr and H. Li, Ergodic theory: independence and dichotomies, Springer, 2017.
\bibitem{K1} F. Krieger, Groupes moyennables, dimension topologique moyenne et sous-d\'{e}calages, Geom.
Dedicata 122 (2006), 15--31.
\bibitem{K2} F. Krieger, Minimal systems of arbitrary large mean topological dimension, Israel J. Math. 172 (2009) 425--444.
\bibitem{Ku} A. G. Kushnirenko, On metric invariants of entropy type, {\it Russian Mathematical Surveys}, {\bf 22} (1967), no. 5, 53-61.
\bibitem{Li} H. Li, Sofic mean dimension, Advances in Mathematics, 2013, 244: 570-604.
\bibitem{LiL} Li H, Liang B. Mean dimension, mean rank, and von Neumann¨CL¨¹ck rank, Journal f¨¹r die reine und angewandte Mathematik (Crelles Journal), 2013.
\bibitem{LiL2} H. Li and B. Liang, Sofic mean length, arXiv:1510.07655, 2015.
\bibitem{L} E. Lindenstrauss, Mean dimension, small entropy factors and an embedding theorem, Inst. Hautes \'{E}tudes Sci. Publ. Math. 89 (1999), 227--262.
\bibitem{L2} E. Lindenstrauss, Pointwise theorems for amenable groups, Invent. Math. 146 (2001) 259--295.
\bibitem{LT} E. Lindenstrauss and M. Tsukamoto, Mean dimension and an embedding problem: an example, Israel J. Math. 199 (2014), no. 2, 573--584.
\bibitem{LT2} Lindenstrauss E, Tsukamoto M. From rate distortion theory to metric mean dimension: variational principle, IEEE Trans. Inform. Theory 64 (2018), no. 5, 3590--3609.
\bibitem{LW} E. Lindenstrauss and B. Weiss, Mean topological dimension, Israel J. Math. 115 (2000) 1--24.
\bibitem{Mis} M. Misiurewicz, A short proof of the variational principle for a $\Z^n_+$ action on a compact space, Ast\'{e}risque 40 (1976), 147--187.
\bibitem{OW} D.S. Ornstein, B. Weiss, Entropy and isomorphism theorems for actions of amenable groups, J. Anal. Math. 48(1987) 1--141.
\bibitem{T1} M. Tsukamoto, A packing problem for holomorphic curves, Nagoya Mathematical Journal, 2009, 194: 33-68.
\bibitem{T2} M. Tsukamoto, Gauge theory on infinite connected sum and mean dimension, Mathematical Physics, Analysis and Geometry, 2009, 12(4): 325-380.
\bibitem{T3} M. Tsukamoto, Large dynamics of Yang--Mills theory: mean dimension formula, arXiv:1407.2058, 2014.
\bibitem{V1} A. M. Vershik, Four definitions of the scale of an automorphism, Functional Analysis and Its Applications, 1973, 7(3): 169-181.
\bibitem{V2} A. M. Vershik, Dynamic theory of growth in groups: entropy, boundaries, examples, {\it Uspekhi Mat. Nauk}, {\bf 55} (2000), no. 4(334), 59-128,
translation in {\it Russian Math. Surveys}, {\bf 55} (2000), no. 4, 667-733.
\bibitem{V3} A. M. Vershik and A. D. Gorbulsky, Scaled entropy of filtrations of ¦Ò-fields, Theory of Probability and Its Applications, 2008, 52(3): 493-508.
\bibitem{WZ} T. Ward and Q. Zhang, The Abramov-Rokhlin entropy addition formula for amenable group actions, Monatsh. Math. 114 (1992), 317-329.
\bibitem{W} B. Weiss, Actions of amenable groups, Topics in Dynamics and Ergodic Theory. (2003) 226--262. London Math. Soc. Lecture Note Ser., 310, Cambridge Univ. Press, Cambridge, 2003.
\bibitem{ZCY} D. Zheng, E. Chen and J. Yang, On large deviations for amenable group actions, Discrete Contin. Dyn. Syst., 36 (2016), no. 12, 7191-7206.

\end{thebibliography}
\end{document}